\theoremstyle{plain}
\newtheorem{theorem}{Theorem}[section]
\newtheorem{lemma}[theorem]{Lemma}
\newtheorem{prop}[theorem]{Proposition}
\newtheorem{corollary}[theorem]{Corollary}
\theoremstyle{definition}
\newtheorem{remark}[theorem]{Remark}
\newcommand{\sign}{\mbox{\rm sign}}
\newcommand{\Dom}{\mbox{\rm DOM}}
\newcommand{\Char}{\mbox{\rm char}}
\newcommand{\Core}{\mbox{\rm Core}}
\newcommand{\N}{\mathbb N}
\newcommand{\Z}{\mathbb{Z}}
\newcommand{\R}{\mathbb{R}}
\begin{document}

	\title[Generalized algebraic rational identities]{Almost subnormal subgroups in division rings with generalized algebraic rational identities}
	\author[Bui Xuan Hai]{Bui Xuan Hai}
	\address{Faculty of Mathematics and Computer Science, VNUHCM-University of Science, 227 Nguyen Van Cu Str., Dist. 5, HCM-City, Vietnam}
	\email{bxhai@hcmus.edu.vn}
	\author[Truong Huu Dung]{Truong Huu Dung}
	\address{Faculty of Mathematics and Computer Science, VNUHCM-University of Science, 227 Nguyen Van Cu Str., Dist. 5, HCM-City, Vietnam}
	\email{thdung@dnpu.edu.vn}
	\author[Mai Hoang Bien]{Mai Hoang Bien}
	\address{Faculty of Mathematics and Computer Science, VNUHCM-University of Science, 227 Nguyen Van Cu Str., Dist. 5, HCM-City, Vietnam}
	\email{mhbien@hcmus.edu.vn}
	
	\keywords{Division ring; generalized algebraic rational identity; almost subnormal subgroup; algebraic valued function. \\
		\protect \indent 2010 {\it Mathematics Subject Classification.} 16R50, 16K40.}
	
		\maketitle
	\begin{abstract} In this paper we study non-central almost subnormal subgroups of the multiplicative group of a division ring satisfying a non-zero generalized rational identity. The main result generalizes Chiba's theorem on subnormal subgroups. As an application, we get a theorem on almost subnormal subgroups satisfying a generalized algebraic rational identity. The last theorem has several corollaries which generalize completely or partially some previous results.    
	\end{abstract}
	
	\section{Introduction, definitions and main results} 
	Let $D$ be a division ring with center $F$. We denote by $D^*$ the multiplicative group of $D$. In this paper, we study some algebraic valued functions in division rings. The notion of algebraic valued functions in division rings was firstly mentioned in \cite{her2} but some special cases were studied before. For instance, in \cite{kap}, Kaplansky proved that if $D$ satisfies a polynomial identity, then $D$ must be centrally finite, that is, $D$ is a finite dimensional vector space over $F$. This result later was extended to division rings with generalized polynomial identities \cite{Pa_Am_65}. In 1966, Amitsur studied rational identities in division rings \cite{Pa_Am_66} and he showed that if the center of a division ring $D$ is infinite and $D$ satisfies a rational identity, then $D$ is centrally finite. In 1970, Bergman \cite{Pa_Be_70} extended Amitsur's work and introduced the notion of generalized rational identities. In 1996, Chiba \cite{chiba} proved that if the center of a division ring $D$ is infinite and $D^*$ contains a non-central subnormal subgroup which satisfies a non-zero generalized rational identity, then $D$ is centrally finite. Our first aim in this paper is to carry over Chiba's result to almost subnormal subgroups. Namely, we shall prove the following theorem.
	\begin{theorem}\label{th1.1} Let $D$ be a division ring with infinite center. If $D^*$ contains a non-central almost subnormal subgroup which satisfies a generalized rational  identity $f=0$ for some non-zero $f(X)\in D(X)$, then $D$ is centrally finite.
	\end{theorem}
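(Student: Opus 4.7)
I would argue by induction on the length $r$ of an almost subnormal series of $N$ in $D^*$, i.e., a chain
\[
N = N_r \leq N_{r-1} \leq \cdots \leq N_0 = D^{*}
\]
in which every step $N_{i+1}\leq N_i$ is either normal or of finite index. The base case $r=0$ gives $N=D^*$, and the conclusion follows directly from Amitsur's theorem on rational identities in division rings with infinite center. For the inductive step I would inspect the top step $N_r\leq N_{r-1}$ and show that $N_{r-1}$ itself satisfies some non-zero generalized rational identity; then the inductive hypothesis applied to the shorter almost subnormal series $N_{r-1}\leq\cdots\leq N_0=D^*$ delivers the conclusion.

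In the normal subcase $N_r\triangleleft N_{r-1}$, I would mimic Chiba's lifting argument: fix a non-central element $y\in N_r$, note that for any $a_1,\ldots,a_n\in N_{r-1}$ the conjugates $a_iya_i^{-1}$ still lie in $N_r$, and conclude that $f(a_1ya_1^{-1},\ldots,a_nya_n^{-1})=0$ holds on $N_{r-1}$. The heart of the matter is to show that the resulting expression is still a non-zero element of $D(X)$, which requires Chiba's technical analysis of how non-triviality of $f$ persists under a generic substitution; infinitude of the center is used precisely here, to choose $y$ outside a finite exceptional set of ``bad'' specializations.

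In the finite-index subcase $[N_{r-1}:N_r]<\infty$, I would pass to the core $K=\bigcap_{g\in N_{r-1}}gN_rg^{-1}$, which is normal in $N_{r-1}$, of finite index in $N_{r-1}$, and contained in $N_r$, hence still satisfies $f=0$. To continue I must verify $K$ is non-central: if $K\subseteq F$, then $N_{r-1}/Z(N_{r-1})$ would be finite, so $N_{r-1}$ would be center-by-finite; a classical theorem on almost subnormal subgroups of division rings then forces $N_{r-1}\subseteq F$, contradicting the non-centrality of the subgroup $N_r$ contained in $N_{r-1}$. Hence $K$ is a non-central normal subgroup of $N_{r-1}$ satisfying $f=0$, which reduces this subcase to the normal subcase treated above.

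The main obstacle, and where I expect the bulk of the technical work to sit, is the preservation of non-triviality of the rational identity under the substitution $x_i\mapsto a_iya_i^{-1}$ at each step of the induction: one has to verify carefully that $f(a_1Ya_1^{-1},\ldots,a_nYa_n^{-1})$, regarded as an element of $D(X\cup\{Y\})$, does not become zero after specializing $Y$ to an element of $N_r$. A secondary, but still delicate, point is that in the finite-index subcase the GRI is transferred from $N_r$ to $K$ without loss, so that the subsequent normal-step argument applies verbatim to the pair $K\triangleleft N_{r-1}$.
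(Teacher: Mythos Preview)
Your inductive skeleton matches the paper's: induction on the length of an almost subnormal series, reduction of the finite-index step to the normal step via the core (this is exactly Remark~\ref{remark2.5}), and lifting the GRI from $N_r$ to $N_{r-1}$ by substituting elements built from a fixed non-central $h\in N_r$. The handling of the core's non-centrality is also essentially the same, though the paper phrases it as: if the core were central then $N_{r-1}$ would satisfy the group identity $x^ny^nx^{-n}y^{-n}=1$, and then invokes \cite[Theorem~2.2]{Pa_NgBiHa_2016}.

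Where you diverge from the paper is precisely at the step you flag as the heart of the matter. You propose the substitution $x_i\mapsto a_iya_i^{-1}$ and hope to pick $y$ outside a finite bad set. The paper instead uses Chiba's commutator substitution
\[
x_i\ \longmapsto\ [z_i,\,y_ihy_i^{-1}]\ =\ z_i\,y_ihy_i^{-1}\,z_i^{-1}\,y_ih^{-1}y_i^{-1},
\]
and cites Lemma~\ref{lem2.4} (Chiba's Lemma~5), which asserts that this substitution yields a non-zero element of $D(X)$ for \emph{every} non-central $h$, provided $f$ is defined at $(1,\ldots,1)$. No genericity in the choice of $h$ is needed. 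Your conjugate substitution can genuinely collapse for particular $y$: e.g.\ if $f(x)=x^2-a$ with $a$ central and $y^2=a$, then $f(tyt^{-1})\equiv 0$. So the ``finite exceptional set'' claim would itself require proof, and there is no guarantee that $N_r$ meets the good locus.

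Two further points you are missing. First, before invoking Lemma~\ref{lem2.4} the paper normalizes $f$ so that it is defined at $(1,\ldots,1)$: Lemma~\ref{lem2.6} shows $N^m\cap\Dom_D(f)\neq\emptyset$ (assuming, for contradiction, that $D$ is centrally infinite), and Lemma~\ref{lem2.3} then translates $f$. Second, this is where the infinite-center hypothesis actually enters, via Lemma~\ref{lem2.6} and the base case \cite[Theorem~1]{chiba}; it is not used to select a generic $y$ as you suggest.
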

	Recall that in accordance with Hartley \cite{Pa_Ha_89}, a subgroup $N$ is {\it almost subnormal} in a group $G$ if there is a series of subgroups 
	$$N=N_r\le N_{r-1}\le\cdots\le N_0=G\eqno (1)$$
	of $G$ such that for each $1<i\le r$, either $N_{i}$ is normal in $N_{i-1}$ or $N_i$ has finite index in $N_{i-1}$. We call a series (1) an {\it almost normal series} in $G$. We say that a subgroup $N$ is {\it almost subnormal of defect} (or {\it depth}) $r$ if there is a series (1) of length $r+1$ and if no such a series of  lesser length exists. The existence of almost subnormal subgroups that are not subnormal in division rings have been noted in \cite{Pa_NgBiHa_2016}. 
	
	Now, let us recall the definition of generalized rational identities. Let $D$ be a division ring with center $F$ and $X=\{x_1,\cdots,x_m\}$ be $m$ (noncommuting) indeterminates. We denote by $F\langle X\rangle$ the free $F$-algebra on $X$, by $D\langle X\rangle$ the free product of $D$ and $F\langle X\rangle$ over $F$, and by $D(X)$ the universal division ring of fractions of $D\langle X\rangle$. The existence of $D(X)$ was shown and studied deeply in \cite[Chapter 7]{Bo_Cohn}. One calls an element $f(X)\in D(X)$ a \textit{generalized rational polynomial}.  If $f(X)\in D(X)$, then by \cite[Theorem 7.1.2]{Bo_Cohn}, $f(X)$ is an entry of the matrix $A^{-1}$,  where $A\in M_n(D\langle X\rangle)$ for some positive integer $n$ such that $A$ is invertible in $M_n(D(X))$. If $f(X)\in F\langle X\rangle$, then we say that $f(X)$ is a \textit{polynomial} in $X$ over $F$. If $f(X)\in D\langle X\rangle$, then $f(X)$ is a \textit{generalized polynomial} in $X$ over $D$. If all coefficients of a generalized rational polynomial $f(X)$ are in $F$, then $f(X)$ is said to be a \textit{rational polynomial} in $X$. Let $c=(c_1,\cdots,c_m)\in D^m$ and $\alpha_c\colon D\langle X\rangle \to D$ be the ring homomorphism defined by $\alpha(x_i)=c_i$. For any $n\in \N, $ let $S(c,n)$ be the set of all square matrices $(f_{ij}(X))$ of degree $n$ over $D\langle X\rangle$ such that the matrix $(f_{ij}(c))$ is invertible in $M_n(D)$. Let $S(c)=\bigcup\limits_{n\ge 1}S(c,n)$ and $E(c)$ be the subset of $D(X)$ consisting of all entries of $A^{-1}$, where $A$ ranges over $S(c)$. Then, $E(c)$ is a subring of $D(X)$ containing $D\langle X\rangle$ as a subring. Moreover, there is a ring homomorphism $\beta_c\colon E(c)\to D$ which extends $\alpha_c$ and every element of $E(c)$ is invertible if and only if the matrix mapped by $\beta_c$ is nonzero. Let $f(X)\in D(X)$ and $c\in D^m$. If $f(X)\in E(c)$, then we say that $f(X)$ is defined at $c$ and $\beta_c(c)$ is denoted by $f(c)$. For any $f(X)\in D(X)$, the set of all $c\in D^m$ such that $f(X)$ is defined at $c$ is called the {\it domain} of $f(X)$ and is denoted by $\Dom_D(f)$. Let $f(X)\in D(X)$ and $S\subseteq D$. If $f(c)=0$ for all $c\in S^m \cap \Dom_D(f)$, then we say that $f=0$ is a {\it generalized rational identity}  (briefly, GRI) of $S$ or $S$ {\it satisfies the generalized rational identity} $f=0$. We refer to \cite[Chapter 7]{Bo_Cohn} and \cite[Chapter 8]{Bo_Rowen} for further reading on generalized rational identities.
	
	Further, we use Theorem \ref{th1.1} to study some algebraic valued functions in division rings. In \cite[Theorem 4]{kap}, Kaplansky proved that a division ring whose elements all are algebraic over its center of bounded degree is centrally finite.  Recently, Bell et {\it al.} considered the left algebraicity over a subfield of a division ring and they proved the analogue theorem for division rings with this property \cite{bell}. In \cite[Theorem 4]{ak}, it was proved that if in a division ring $D$ with center $F$ all additive commutators $xy-yx$ are algebraic over $F$ of bounded degree, then $D$ is centrally finite. An analogue result for multiplicative commutators was also obtained in \cite{chebo}. Namely, $D$ is centrally finite in the case when either $xyx^{-1}y^{-1}$ are algebraic over $F$ of bounded degree for all $x,y\in D^*$ or $\Char (D)=0$ and there exists a non-central element $a$ such that $axa^{-1}x^{-1}$ are  algebraic over $F$ of bounded degree for all $x\in D$ (see \cite[theorems 6, 7]{chebo}). Moreover, in \cite[Theorem 4]{ak} and \cite[theorems 6, 7]{chebo}, the bound of $[D:F]$ was also given. In the following, we give the definition of a \textit{generalized algebraic rational identity} (GARI) which generalizes the notion of generalized rational identity GRI given above. As an application of Theorem~\ref{th1.1}, we get some theorems which generalize completely or partially results mentioned above. 
	
	Recall that an element $\alpha\in D$ is called {\it algebraic} over $F$ if $\alpha$ is a root of non-zero polynomial over $F$, that is $$a_n\alpha^n+a_{n-1}\alpha^{n-1}+\cdots+a_1\alpha+a_0=0$$ for some $n\in \N$, $a_i\in F$ and $a_n\ne 0$. Let $S$ be a subset of $D$ and  $f(X)$ be a generalized rational polynomial over $D$. We say that $f$ is a {\it generalized algebraic rational identity} (briefly, GARI) of $S$ (or  $S$ \textit{satisfies the} GARI $f$) if $f(c)$ is algebraic over $F$ whenever $c=(c_1,\cdots, c_m)\in S^m\cap \Dom_D(f)$. A GARI $f$ of $S$ is \textit{non-trivial} if there exists a division ring $D_1$ containing $D$ as a division subring such that the center $Z(D_1)$ of $D_1$ is exactly $F$ and there exists an element $a=(a_1,\cdots,a_m)\in \Dom_{D_1}(f)$  such that $f(a)$ is not algebraic over $F$. The class of non-trivial GARIs is very big. In the next, we will show that every non-zero GRI is a non-trivial GARI. Using Theorem \ref{th1.1}, we shall prove the following theorem.
	\begin{theorem}\label{th1.2} Let $D$ be a division ring with infinite center and assume that $N$ is a non-central almost subnormal subgroup of $D^*$. If $N$ satisfies a non-trivial {\rm GARI} of bounded degree, then $D$ is centrally finite.
	\end{theorem}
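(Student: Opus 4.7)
The plan is to reduce Theorem~\ref{th1.2} to Theorem~\ref{th1.1} by converting the non-trivial \textrm{GARI} satisfied by $N$ into a genuine non-zero \textrm{GRI} satisfied by $N$. Write $n$ for the common algebraic degree bound, so that for every $c\in N^m\cap\Dom_D(f)$ the element $f(c)$ is algebraic over $F$ of degree $\le n$; equivalently, $1,f(c),f(c)^{2},\ldots,f(c)^{n}$ are $F$-linearly dependent.

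I would introduce $n+2$ fresh noncommuting indeterminates $Y=\{y_0,y_1,\ldots,y_{n+1}\}$ and form the Capelli-type generalized rational polynomial
\[
g(X,Y) \;=\; \sum_{\sigma\in S_{n+1}}\sign(\sigma)\; y_{0}\, f(X)^{\sigma(0)}\, y_{1}\, f(X)^{\sigma(1)}\,\cdots\, y_{n}\, f(X)^{\sigma(n)}\, y_{n+1},
\]
where $S_{n+1}$ is viewed as the group of permutations of $\{0,1,\ldots,n\}$; note that $g\in D(X\cup Y)$. The first step is to check that $g=0$ is a \textrm{GRI} of $N$ on the enlarged variable set: given $c\in N^m\cap\Dom_D(f)$ and $b_0,\ldots,b_{n+1}\in N$, pick a non-trivial $F$-linear relation among $1,f(c),\ldots,f(c)^n$ and substitute the chosen power into $g(c,b_0,\ldots,b_{n+1})$. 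Because the relation coefficients lie in the center $F$ they commute through every factor, and the alternation of $g$ in the powers $f(X)^{0},\ldots,f(X)^{n}$ forces the sum to collapse to zero. This is the classical Capelli argument showing that Capelli-type polynomials vanish on any tuple linearly dependent over the center.

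The second step is to verify that $g\neq 0$ as an element of $D(X\cup Y)$, and here the ``non-trivial'' hypothesis is essential. By definition there is a division ring $D_1\supseteq D$ with $Z(D_1)=F$ and an $a\in\Dom_{D_1}(f)$ such that $f(a)$ is not algebraic over $F$; hence $1,f(a),f(a)^2,\ldots,f(a)^n$ are $F$-linearly independent in $D_1$, and the converse direction of the Capelli principle (via the Jacobson density theorem applied in the simple $F$-algebra $D_1$) yields $b_0,\ldots,b_{n+1}\in D_1$ with $g(a,b_0,\ldots,b_{n+1})\neq 0$. Thus $g\neq 0$ in $D_1(X\cup Y)$, and hence in $D(X\cup Y)$ by the natural extension of universal fields of fractions (cf.\ \cite[Chapter 7]{Bo_Cohn}). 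Applying Theorem~\ref{th1.1} to the non-central almost subnormal subgroup $N$ and the non-zero \textrm{GRI} $g=0$ on the variable set $X\cup Y$ then yields that $D$ is centrally finite.

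The main obstacle I anticipate is the second step, specifically the propagation of ``$g\ne 0$ at a point of $D_1$'' to ``$g\ne 0$ in $D(X\cup Y)$''; this demands a careful base-change statement for universal fields of fractions, and it is precisely this step that makes the ``non-trivial'' adjective indispensable. Modulo this point, the plan is a clean application of Capelli-style central polynomials to translate bounded algebraicity into an ordinary generalized rational identity, after which Theorem~\ref{th1.1} closes the argument.
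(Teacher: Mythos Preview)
Your proposal is correct and matches the paper's proof almost exactly: the paper substitutes $f(X)$ into the Capelli polynomial $g_k$ of Lemma~\ref{3.1} (your version carries harmless extra outer variables $y_0,y_{n+1}$), shows the resulting $w(X,Y)=g_k(f(X),Y)$ vanishes on $N$ by one direction of Lemma~\ref{3.1}, uses the non-triviality witness in $D_1$ together with the other direction of Lemma~\ref{3.1} to produce a point where $w$ is non-zero, and then invokes Theorem~\ref{th1.1}. The base-change concern you flag---passing from $w(a,b)\ne 0$ in $D_1$ to $w\ne 0$ in $D(X\cup Y)$---is precisely the content of Lemma~\ref{lem2.2}, so no additional argument is needed there.
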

	
	Theorem \ref{th1.2} has the following corollaries.
	
	\begin{corollary}\label{cor1.3} Let $D$ be a division ring with center $F$ and assume that $N$ is a non-central almost subnormal subgroup of $D^*$. If all elements of $N$ are algebraic over $F$ of bounded degree, then $D$ is centrally finite. 
	\end{corollary}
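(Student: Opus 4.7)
The plan is to deduce the corollary directly from Theorem \ref{th1.2} by choosing the trivial generalized rational polynomial $f(x_1) = x_1 \in D\langle x_1\rangle \subseteq D(x_1)$. Since $\Dom_D(f) = D$ and every $c \in N$ is by hypothesis algebraic over $F$ of degree at most some fixed $n$, the value $f(c) = c$ is always algebraic over $F$ of degree at most $n$, so $N$ satisfies the {\rm GARI} $f$ of bounded degree $n$.

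The main step is to verify that this $f$ is non-trivial: I need to produce a division ring $D_1 \supseteq D$ with $Z(D_1) = F$ containing an element transcendental over $F$. Assuming $D$ is non-commutative (otherwise $D = F$ and the corollary is trivial), I take $D_1 = D(y_1, y_2)$, the universal division ring of fractions of the free product $D\langle y_1, y_2\rangle = D *_F F\langle y_1, y_2\rangle$, whose existence follows from \cite[Chapter 7]{Bo_Cohn}. A direct argument on reduced words in the free product shows that the centralizer of $D$ in $D\langle y_1, y_2\rangle$ is precisely $F$, and this property lifts to $D(y_1, y_2)$, yielding $Z(D_1) = F$. The element $y_1 \in D_1$ is manifestly transcendental over $F$, so $f(y_1) = y_1$ is not algebraic over $F$, confirming that $f$ is a non-trivial {\rm GARI}. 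Whenever $F$ is infinite, Theorem \ref{th1.2} then immediately yields that $D$ is centrally finite.

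When $F$ is finite, the situation is handled vacuously: any $c \in N$ algebraic over $F$ of degree at most $n$ lies in a subfield of $D$ of cardinality at most $|F|^n$, so $N$ consists of elements of bounded finite multiplicative order. A torsion almost subnormal subgroup of $D^*$ must be central by a Herstein-type theorem extended to the almost subnormal setting, contradicting the non-centrality of $N$, so the two hypotheses cannot both hold. The principal obstacle in the main argument is the clean justification that $Z(D(y_1,y_2)) = F$, which rests on the structural properties of the free product $D *_F F\langle y_1, y_2\rangle$ and the Cohn localization theory developed in \cite[Chapter 7]{Bo_Cohn}.
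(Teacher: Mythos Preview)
Your proof is correct and follows the paper's approach: dispose of finite $F$ by observing that $N$ is then torsion and hence central (the paper cites \cite[Proposition~4.4]{Pa_NgBiHa_2016} for this step), and for infinite $F$ apply Theorem~\ref{th1.2} to the GARI $f(x)=x$. The only minor difference is that you verify non-triviality of $f$ by directly constructing $D_1=D(y_1,y_2)$ (the same construction used in the proof of Lemma~\ref{lem2.2}), whereas the paper's own machinery offers the shortcut of Theorem~\ref{th3.4}, since $f(1)=1\in F$ and $1\in N$.
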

	\begin{corollary}\label{cor1.4} Let $D$ be a division ring with infinite center $F$ and $N$ is a non-central almost subnormal subgroup of $D^*$. If  $xy-yx$ is algebraic over $F$ of bounded degree for all $x, y\in N$, then $D$ is centrally finite. 
	\end{corollary}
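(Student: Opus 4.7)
The plan is to deduce Corollary \ref{cor1.4} as an immediate application of Theorem \ref{th1.2} by realizing the commutator hypothesis as a non-trivial GARI of bounded degree. I will take the generalized polynomial $f(x_1,x_2)=x_1x_2-x_2x_1\in D\langle X\rangle\subseteq D(X)$, which lies in $D\langle X\rangle$ and is therefore defined at every point of $D^2$. The assumption that every additive commutator $xy-yx$ with $x,y\in N$ is algebraic over $F$ of degree at most some fixed $n$ translates immediately into the statement that $N$ satisfies the GARI $f$ of bounded degree $n$, so the only substantial point to check is non-triviality.

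For non-triviality I propose taking $D_1=D(x_1,x_2)$, the universal division ring of fractions of the coproduct $D\langle x_1,x_2\rangle$ over $F$, and $a=(x_1,x_2)\in D_1^2$. Two ingredients are needed. The first is $Z(D_1)=F$, which is a standard property of the universal field of fractions of a free product of a division ring with a free algebra and which I will cite from \cite{Bo_Cohn}. The second is that $f(a)=x_1x_2-x_2x_1$ is not algebraic over $F$ in $D_1$; this will follow from a short grading argument in the free algebra $D\langle x_1,x_2\rangle$. Under the usual length grading, $x_1x_2-x_2x_1$ is homogeneous of degree $2$ and its $k$-th power is non-zero and homogeneous of degree $2k$, so the powers $(x_1x_2-x_2x_1)^k$ are $F$-linearly independent; since any purported polynomial relation over $F$ would already hold inside $D\langle x_1,x_2\rangle\subseteq D(x_1,x_2)$, no such relation can exist.

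These two ingredients provide exactly the data required by the definition of a non-trivial GARI preceding Theorem \ref{th1.2}, and the theorem then yields that $D$ is centrally finite. The principal delicate input in the whole plan is the identification $Z(D(x_1,x_2))=F$; the remainder is a clean repackaging of the commutator hypothesis in the GARI language introduced earlier in the paper.
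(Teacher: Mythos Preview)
Your argument is correct, and the overall plan---show that $f(x_1,x_2)=x_1x_2-x_2x_1$ is a non-trivial GARI of $N$ of bounded degree and then invoke Theorem~\ref{th1.2}---is exactly the paper's route. The only place you diverge is in how you verify non-triviality. The paper's intended shortcut (made explicit in the proofs of Corollaries~\ref{cor1.5} and~\ref{cor1.6}) is to apply Theorem~\ref{th3.4}: since $f\in D(X)\setminus D$ and $f(1,1)=0\in F$ with $(1,1)\in N^2$, non-triviality follows immediately. You instead build the witness division ring by hand, taking $D_1=D(x_1,x_2)$, citing $Z(D_1)=F$ (a fact the paper also uses, in the proof of Lemma~\ref{lem2.2}), and checking via the degree grading on $D\langle x_1,x_2\rangle$ that $x_1x_2-x_2x_1$ is transcendental over $F$.

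Both approaches are sound. Yours is more self-contained for this particular $f$ and avoids the skew Laurent series construction hidden inside Theorem~\ref{th3.4}; the paper's route is shorter once that machinery is in place and works uniformly for any $f$ with a central value on $S$. Your grading argument is fine (the monomial $(x_1x_2)^k$ appears with coefficient $1$ in $(x_1x_2-x_2x_1)^k$, so the powers are non-zero and homogeneous of distinct degrees), and the embedding $D\langle x_1,x_2\rangle\hookrightarrow D(x_1,x_2)$ transports the linear independence as you say.
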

	\begin{corollary}\label{cor1.5} Let $D$ be a division ring with center $F$ and assume that $N$ is a non-central almost subnormal subgroup of $D^*$. If $\Char(D)=0$ and there exists an element $a\not\in F$ such that $axa^{-1}x^{-1}$ is algebraic over $F$ of bounded degree for all $x\in N$, then $D$ is centrally finite.
	\end{corollary}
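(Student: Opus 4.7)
The plan is to apply Theorem~\ref{th1.2} to the generalized rational polynomial $f(X)=aXa^{-1}X^{-1}$ in a single noncommuting indeterminate $X$ over $D$. Since $a\ne 0$, one has $\Dom_D(f)\supseteq D^*$, and the hypothesis says that $f(x)$ is algebraic over $F$ of degree at most some fixed integer $n$ for every $x\in N$, so $N$ satisfies $f$ as a GARI of bounded degree. The only remaining task, and the one I expect to be the main obstacle, is to verify that $f$ is \emph{non-trivial}: one has to exhibit an overring $D_1\supseteq D$ with $Z(D_1)=F$ together with a point at which $f$ takes a value that is not algebraic over $F$, and this has to be done without yet knowing that $D$ is centrally finite.

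For the overring I would take $D_1:=D(y)$, the universal division ring of fractions of the free product $D\langle y\rangle=D*_F F[y]$, where $y$ is a single noncommuting indeterminate. A standard centralizer argument for free products---exploiting that no nonconstant element of $F[y]$ commutes with a non-central element of $D$ inside the free product---shows that the centralizer of $D$ inside $D_1$ is $F$, and hence $Z(D_1)=F$. With this $D_1$ fixed, the proof splits according to whether the element $c:=aya^{-1}y^{-1}\in D_1$ is algebraic over $F$.

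If $c$ is \emph{not} algebraic over $F$, then $y\in\Dom_{D_1}(f)$ witnesses that $f$ is a non-trivial GARI of $N$ of bounded degree, and Theorem~\ref{th1.2} immediately yields that $D$ is centrally finite. If instead $c$ is algebraic over $F$, fix a non-zero $P\in F[t]$ with $P(c)=0$; since $c\in E(b)$ for every $b\in D^*$, applying the specialization homomorphism $\beta_b\colon E(b)\to D$ gives $P(aba^{-1}b^{-1})=\beta_b(P(c))=0$ for every $b\in D^*$. Thus every multiplicative commutator $aba^{-1}b^{-1}$ with $b\in D^*$ is algebraic over $F$ of degree at most $\deg P$, and since $\Char(D)=0$ and $a\notin F$, Theorem~7 of \cite{chebo} (quoted in the introduction) already forces $D$ to be centrally finite. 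Either branch of this dichotomy gives the corollary.
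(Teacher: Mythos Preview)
Your argument is correct, but it takes a considerably longer route than the paper's. The paper simply observes that $w(x)=axa^{-1}x^{-1}$ satisfies $w(1)=1\in F$ with $1\in N$, and $w(X)\in D(X)\setminus D$ (since $a\notin F$), so Theorem~\ref{th3.4} applies and yields non-triviality of $w$ in one stroke; then $\Char(D)=0$ makes $F$ infinite and Theorem~\ref{th1.2} finishes. No overring is built by hand, no dichotomy is needed, and no external result is invoked. Your approach instead constructs $D_1=D(y)$, splits on whether $aya^{-1}y^{-1}$ is algebraic over $F$, and in the second branch falls back on \cite[Theorem~7]{chebo}---precisely the theorem this corollary is advertised as partially extending, which is logically fine but rather defeats the purpose. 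Two minor cautions on your write-up: first, the sketch you give for $Z(D(y))=F$ is really an argument about the free product $D\langle y\rangle$; passing from there to its universal field of fractions requires an extra step (the conclusion is true, and the paper itself uses the two-variable analogue without proof in Lemma~\ref{lem2.2}, but your parenthetical does not cover it). Second, your Case~2 is in fact vacuous: if $P(aya^{-1}y^{-1})=0$ in $D(y)$ then the same identity holds in $D_1(y)$ for the overring $D_1$ produced by Theorem~\ref{th3.4}, and your own specialization trick then forces every $w(e)$ with $e\in D_1^*$ to satisfy $P$, contradicting the choice of $D_1$. So once Theorem~\ref{th3.4} is available the dichotomy collapses, and you may as well use it directly.
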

	\begin{corollary}\label{cor1.6} Let $D$ be a division ring with center $F$ and assume that $N$ is a non-abelian almost subnormal subgroup of $D^*$. If $xyx^{-1}y^{-1}$ is algebraic over $F$ of bounded degree $d$ for all $x,y\in N$, then $D$ is centrally finite.
	\end{corollary}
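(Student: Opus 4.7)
The plan is to reduce to Theorem \ref{th1.2} via the generalized rational polynomial $f(x_1,x_2) := x_1 x_2 x_1^{-1} x_2^{-1} \in D(x_1,x_2)$. First I would observe that $N$ is automatically non-central: a subgroup of $D^*$ contained in $F$ would be abelian, contradicting the hypothesis that $N$ is non-abelian. Next, $f$ is a non-zero element of $D(x_1,x_2)$, and each pair $(a,b)\in N^2$ lies in $\Dom_D(f)$ since $f$ is defined at every pair of nonzero arguments. The hypothesis that each multiplicative commutator $aba^{-1}b^{-1}$ is algebraic over $F$ of degree at most $d$ is therefore exactly the statement that $N$ satisfies the GARI $f$ of bounded degree $d$.

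To apply Theorem \ref{th1.2} I would need $f$ to be a non-trivial GARI. Since $f$ is non-zero in $D(x_1,x_2)$, this would follow from the general assertion (promised in the paragraph following Theorem \ref{th1.2}) that every non-zero GRI is a non-trivial GARI; granted that fact, there exists an extension division ring $D_1\supseteq D$ with $Z(D_1)=F$ and a pair $(a,b)\in \Dom_{D_1}(f)$ for which $aba^{-1}b^{-1}$ fails to be algebraic over $F$. When $F$ is infinite, Theorem \ref{th1.2} then immediately yields that $D$ is centrally finite.

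The finite-center case I would handle separately. If $F$ is finite, each commutator $aba^{-1}b^{-1}$ lies in a subfield of $D$ of size at most $|F|^d$ and is hence a torsion element of $D^*$; consequently $[N,N]$ is a torsion subgroup of $D^*$. Since $[N,N]$ is normal in $N$ and $N$ is almost subnormal in $D^*$, the derived subgroup $[N,N]$ is itself almost subnormal in $D^*$. Invoking an almost-subnormal analogue of Herstein's theorem on torsion subnormal subgroups (in the spirit of \cite{Pa_NgBiHa_2016}) would give $[N,N]\subseteq F$, so that $N$ is center-by-abelian and in particular solvable; a corresponding extension of the Stuth--Scott result on solvable subnormal subgroups would then force $N\subseteq F$, contradicting the non-abelianness of $N$. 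Hence in the finite-center case no such $N$ exists and the implication is vacuously true.

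The main obstacle I anticipate is precisely the assertion that every non-zero GRI is a non-trivial GARI: this is a genuine construction problem, requiring one to produce an overring $D_1\supseteq D$ with $Z(D_1)=F$ in which $f$ admits a non-algebraic specialization. Once that general fact is in hand, the corollary follows in a few lines by reduction to Theorem \ref{th1.2} in the infinite-center case, the finite-center case being trivially dispatched by the torsion argument above.
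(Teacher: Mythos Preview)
Your reduction to Theorem~\ref{th1.2} is the right idea, and the paper proceeds the same way via $w(x,y)=xyx^{-1}y^{-1}$. However, your justification of non-triviality is mistaken. The assertion that ``every non-zero GRI is a non-trivial GARI'' (Corollary~\ref{cor3.6}) applies only when $f=0$ is actually an \emph{identity} on $S$, i.e.\ $f$ vanishes on $S^m\cap\Dom_D(f)$. Here $f(a,b)=aba^{-1}b^{-1}$ is never zero, so $f=0$ is \emph{not} a GRI of $N$ and Corollary~\ref{cor3.6} does not apply. Moreover, being a non-zero element of $D(X)$ is by itself insufficient: the paper's example $f(X)=(X-1)i(X-1)^{-1}$ preceding Corollary~\ref{cor3.6} is non-zero in $D(X)$ yet satisfies $f(X)^2=-1$, so it is algebraic of degree~$2$ everywhere and is a \emph{trivial} GARI. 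The correct tool is Theorem~\ref{th3.4}: since $w(1,1)=1\in F$ and $w\in D(X)\setminus D$, Theorem~\ref{th3.4} immediately gives that $w$ is a non-trivial GARI. This is in fact simpler than the obstacle you anticipated.

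For the finite-center case your outline is different from the paper's and somewhat loosely sourced. Your torsion argument for $[N,N]$ does go through via \cite[Proposition~4.4]{Pa_NgBiHa_2016} (torsion almost subnormal subgroups are central), and then $N$ is nilpotent of class~$\le 2$, hence satisfies a group identity, so \cite[Theorem~2.2]{Pa_NgBiHa_2016} forces $N$ central --- there is no need to invoke an unproved Stuth--Scott analogue. The paper instead argues directly that $N$ satisfies the group identity $w(x,y)^n=1$ for $n=|F|^d-1$, and handles the two cases (some commutator non-central vs.\ all commutators central) separately, each time passing to a suitable division subring with infinite center and invoking \cite[Theorem~2.2]{Pa_NgBiHa_2016}. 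Your route is shorter once the right citations are in place; the paper's is more self-contained but longer.
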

	Corollary \ref{cor1.3} is a broad extension of \cite[Corollary 3]{mah} and the Jacobson Theorem \cite[Theorem 7]{jab}; Corollary \ref{cor1.4}  partially extends \cite[Theorem 4]{ak}; Corollary \ref{cor1.5} and Corollary \ref{cor1.6} partially extend \cite[Theorem 7]{chebo} and \cite[Theorem 6]{chebo} respectively. It is unfortunate that in corollaries \ref{cor1.4}, \ref{cor1.5} and \ref{cor1.6} we are not able to give the bound for the dimension of $D$ over $F$.    

\section{Proof of Theorem \ref{th1.1}}
Let $D$ be a division ring with center $F$ and $\phi$ be a ring automorphism of $D$. We write $D((\lambda,\phi))$ for the ring of skew Laurent series $\sum\limits_{i = n}^\infty  {{a_i}{\lambda^i}}$, where $n\in \Z, a_i\in D$, with the multiplication defined by the twist equation $\lambda a=\phi(a)\lambda$ for every $a\in D$. If $\phi=Id_D$, then we write $D((\lambda))$ instead of $D((\lambda,Id_D))$. It is known that  $D((\lambda,\phi))$ is a division ring (see \cite[Example 1.8]{lam}). Moreover, we have the following results.

\begin{lemma}\label{lem2.1}{\rm \cite[Lemma 2.1]{bien_13}} Let $D$ be a division ring with center $F$. Assume that $K=\{\, a\in D\mid \phi(a)=a\}$ is the fixed division subring of $\phi$ in $D$. If the center $k=Z(K)$ of $K$ is contained in $F$, then the center of $D((\lambda,\phi))$ is 
	
	$$Z(D((\lambda,\phi)))=\left\{ {\begin{array}{*{20}{c}}
		k&{\text{ if } \phi \text{ has infinite order, }}\\
		{k(({\lambda^s}))}&{\text{ if } \phi \text{ has a finite order } s.}
		\end{array}} \right.$$
		In particular, the center of $D((\lambda))$ is $F((\lambda))$.  
		\end{lemma}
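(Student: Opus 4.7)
The plan is to pick a general element $z = \sum_{i \ge n} a_i \lambda^i$ of the center and extract constraints on its coefficients from two kinds of commutation conditions. Comparing $\lambda z$ with $z\lambda$ and using the twist $\lambda a = \phi(a)\lambda$ immediately forces $\phi(a_i) = a_i$ for every $i$, so all coefficients lie in the fixed division subring $K$. Then, for each $b \in D$, comparing $bz$ with $zb$ yields the coefficientwise identity $b a_i = a_i \phi^i(b)$; whenever $a_i \neq 0$, this realises $\phi^i$ as the inner automorphism $b \mapsto a_i^{-1} b a_i$ of $D$.

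The decisive step is to upgrade ``$\phi^i$ is inner'' to ``$\phi^i = \id_D$'', and this is exactly where the hypothesis $k = Z(K) \subseteq F$ will be used. Restricting the identity $\phi^i(b) = a_i^{-1} b a_i$ to $b \in K$ (on which $\phi^i$ acts trivially, since $\phi|_K = \id_K$) gives $a_i b = b a_i$ for all $b \in K$, so $a_i \in Z(K) = k$. By hypothesis $k \subseteq F = Z(D)$, hence $a_i$ is central in $D$; cancelling $a_i$ on both sides of $b a_i = a_i \phi^i(b)$ then collapses that relation to $\phi^i(b) = b$ for every $b \in D$.

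With this dichotomy in hand, the two cases are immediate. If $\phi$ has infinite order, $\phi^i = \id_D$ forces $i = 0$, so $z = a_0 \in k$, giving $Z(D((\lambda,\phi))) \subseteq k$. If $\phi$ has finite order $s$, then $\phi^i = \id_D$ iff $s \mid i$, so the non-zero coefficients occur only at exponents divisible by $s$ with scalars in $k$, yielding $Z(D((\lambda,\phi))) \subseteq k((\lambda^s))$; the reverse inclusion is a short check, since scalars in $k$ commute with all of $D$ and $\lambda^s$ commutes with $D$ precisely because $\phi^s = \id_D$. The special case $Z(D((\lambda))) = F((\lambda))$ follows by taking $\phi = \id_D$, for which $K = D$, $k = F$, and $s = 1$. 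The only nontrivial move is the middle step, where the condition $k \subseteq F$ is what prevents a genuinely non-trivial inner power of $\phi$ from contributing central coefficients; everything else is straightforward bookkeeping with the twist relation.
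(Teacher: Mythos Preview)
Your argument is correct and is the standard direct computation of the center via the twist relation. Note that the paper itself does not supply a proof of this lemma; it is quoted from \cite{bien_13}, so there is no in-paper proof to compare against, but what you have written is exactly the expected verification and would serve as a self-contained proof here.
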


\begin{lemma}\label{lem2.2}
	Let $D$ be a division ring and $f(X)\in D(X)$. Then $f(X)\ne 0$  if and only if there exist a division ring $D_1$ containing $D$ as a division subring and an $m$-tuple $a=(a_1,\cdots,a_m)\in \Dom_{D_1}(f)$ such that $f(a)\ne 0$.
\end{lemma}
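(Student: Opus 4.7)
The plan is to take the universal field of fractions $D(X)$ itself as the ambient division ring $D_1$, with the generic tuple $(x_1,\dots,x_m)\in D(X)^m$ serving as the test point. This makes the non-trivial forward direction essentially a tautological specialization, while the converse reduces to the ring-homomorphism nature of the evaluation maps $\beta_a$.

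For the direction $f(X)\ne 0\Rightarrow$ existence of $(D_1,a)$, I would invoke \cite[Theorem~7.1.2]{Bo_Cohn} to write $f(X)=(A^{-1})_{ij}$ for some $A\in M_n(D\langle X\rangle)$ that is invertible in $M_n(D(X))$. Setting $D_1:=D(X)$, so that $D$ sits inside $D_1$ as a division subring, and $a:=(x_1,\dots,x_m)\in D_1^m$, the substitution $x_i\mapsto a_i$ acts as the identity on $D\langle X\rangle$. Thus $A(a)=A$ viewed inside $M_n(D_1)$ is invertible by hypothesis; hence $A\in S_{D_1}(a,n)$, $f(X)\in E_{D_1}(a)$, i.e.\ $a\in\Dom_{D_1}(f)$, and the corresponding entry of $A(a)^{-1}$ is $f(X)$ itself, so $f(a)=f(X)\ne 0$.

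For the converse, assume $f(X)=0$ in $D(X)$. The inclusion $D\hookrightarrow D_1$ induces $D\langle X\rangle\hookrightarrow D_1\langle X\rangle$ and composes with the embedding into $D_1(X)$ to give a map from $D\langle X\rangle$ into a division ring; by Cohn's universal property this factors through a ring homomorphism $D(X)\to D_1(X)$. Since this homomorphism takes $0$ to $0$ and is compatible with the evaluation machinery, the ring-homomorphism property of $\beta_a\colon E_{D_1}(a)\to D_1$ forces $f(a)=\beta_a(f(X))=0$ for every $a\in\Dom_{D_1}(f)$, contradicting the assumption that a witness exists.

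The only real subtlety, and the main potential obstacle, is the functoriality of the universal field-of-fractions construction under division-ring inclusions, needed for the converse direction; this is routine from Cohn's framework but must be invoked carefully. The forward direction, by contrast, reduces to the pleasant observation that $D(X)$ is already its own generic specialization point, making the conclusion essentially immediate from the matrix description of $f$.
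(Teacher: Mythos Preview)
Your argument is correct and takes a genuinely different route from the paper's. For the forward direction the paper first replaces $D$ by $D((\lambda))$ to force the center to be infinite, then sets $D_1=D(\lambda_1,\lambda_2)$ (the free field on two extra noncommuting indeterminates) and invokes Chiba's theorem \cite[Theorem~1]{chiba} to produce a point $a\in\Dom_{D_1}(f)$ with $f(a)\neq 0$. Your choice $D_1=D(X)$ with the generic point $a=(x_1,\dots,x_m)$ sidesteps Chiba's theorem entirely: the substitution map $\alpha_a$ is then just the inclusion $D\langle X\rangle\hookrightarrow D(X)$, so the invertibility of $A(a)=A$ in $M_n(D_1)$ is exactly the hypothesis, and $f(a)=f(X)\ne 0$ is tautological. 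This is strictly more elementary and self-contained. The paper's construction does yield the extra feature $Z(D_1)=F$, but the lemma as stated does not require this, and the only later use of Lemma~\ref{lem2.2} (in Lemma~\ref{lem2.3}) does not need it either. Your treatment of the converse is also more explicit than the paper's one-word ``trivial'': you correctly isolate that one must know the map $D(X)\to D_1(X)$ exists (Cohn's universal property for the free field over a fir), after which $\beta_a(0)=0$ finishes it. That functoriality is indeed standard in Cohn's framework, so the paper's brevity is defensible, but your version is cleaner.
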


\begin{proof}
	($\Rightarrow$) Assume that $f(X)\ne 0$. Let $F$ be the center of $D$. By Lemma~\ref{lem2.1}, the center $F((\lambda))$ of $D((\lambda))$ is infinite. Since $f(X)$ is also a generalized rational polynomial over $D((\lambda))$, without loss of generality, we can assume that $F$ is infinite because if it is necessary, we can consider $D((\lambda))$ instead of $D$. 
	Let $\lambda_1,\lambda_2$ be noncommuting indeterminates. Denote by $D_1=D(\lambda_1,\lambda_2)$ the universal division ring of fractions of $D\langle \lambda_1,\lambda_2\rangle$. Then, the center of $D_1$ is $F$ and $D_1$ is centrally infinite. It is clear that $D_1$ contains $D$ as a division subring and $f(X)$ is a non-zero generalized rational polynomial over $D_1$. Now, in view of \cite[Theorem 1]{chiba}, there exists $a=(a_1,\cdots,a_m)\in \Dom_{D_1}(f)$ such that $f(a)\ne 0$.
	
	($\Leftarrow$) This implication is trivial.
\end{proof}

\begin{lemma}\label{lem2.3}
	Let $D$ be a division ring and $f(X)\in D(X)$. Assume that $f=0$ is a non-zero GRI of a subgroup $S$ of $D^*$. If $(c_1,\cdots,c_m)\in S^m\cap \Dom_D(f)$ and if we put $$g(y_1,\cdots,y_m)=f(c_1y_1,\cdots,c_my_m),$$ then $g=0$ is a non-zero GRI of $S$. As a corollary, if $f$ is a non-zero {\rm GRI} of $S$, then either $S^m\cap \Dom_D(f)= \emptyset$ or $S$ satisfies  some non-zero {\rm GRI} $g$ such that $(1,\cdots,1)\in S\cap \Dom_D(g)$.
\end{lemma}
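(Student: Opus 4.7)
I would prove the two assertions of the lemma---that $g(Y):=f(c_1y_1,\ldots,c_my_m)$ with $Y=\{y_1,\ldots,y_m\}$ is a well-defined non-zero element of $D(Y)$, and that $S$ satisfies $g=0$---in that order.

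For well-definedness and non-vanishing, the plan is to exploit the invertibility of the $c_i$. The $D$-algebra homomorphism $\psi\colon D\langle X\rangle\to D\langle Y\rangle$ with $\psi(x_i)=c_iy_i$ admits the two-sided inverse $y_i\mapsto c_i^{-1}x_i$, so it is an isomorphism of free $D$-algebras. By the functoriality of the universal division ring of fractions of a free algebra \cite[Chapter 7]{Bo_Cohn}, $\psi$ extends uniquely to an isomorphism $\widetilde\psi\colon D(X)\to D(Y)$, and one defines $g=\widetilde\psi(f)$. Since $\widetilde\psi$ is an isomorphism and $f\ne 0$, this forces $g\ne 0$ in $D(Y)$.

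To check that $g=0$ is a GRI of $S$, I would write $f$ as an entry of $A^{-1}$ for some $A\in M_n(D\langle X\rangle)$ invertible over $D(X)$, so that $g$ is the same entry of $B^{-1}$ where $B=\psi(A)\in M_n(D\langle Y\rangle)$. For $(s_1,\ldots,s_m)\in S^m\cap\Dom_D(g)$, the matrix $B(s_1,\ldots,s_m)=A(c_1s_1,\ldots,c_ms_m)$ is invertible in $M_n(D)$, hence $(c_1s_1,\ldots,c_ms_m)\in\Dom_D(f)$; since $S$ is a subgroup of $D^*$, each $c_is_i$ belongs to $S$, and the GRI hypothesis then gives $f(c_1s_1,\ldots,c_ms_m)=0$. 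By construction $g(s_1,\ldots,s_m)$ is the corresponding entry of $A(c_1s_1,\ldots,c_ms_m)^{-1}$, so $g(s_1,\ldots,s_m)=0$.

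For the corollary I would assume $S^m\cap\Dom_D(f)\ne\emptyset$, pick any $(c_1,\ldots,c_m)$ in it, and form $g$ as above; then $B(1,\ldots,1)=A(c_1,\ldots,c_m)$ is invertible in $M_n(D)$, so $(1,\ldots,1)\in\Dom_D(g)$, and since $1\in S$ this tuple lies in $S^m\cap\Dom_D(g)$, as desired. The only delicate step is the extension of $\psi$ to an isomorphism of the universal fields of fractions; this is routine from the universal property of $D(X)$, and the remainder of the argument is bookkeeping that evaluation commutes with the substitution $x_i\mapsto c_iy_i$.
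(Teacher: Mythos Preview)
Your argument is correct and takes a genuinely different route from the paper's for the key step of showing $g\ne 0$. The paper invokes Lemma~\ref{lem2.2}: since $f\ne 0$, there exist a division ring $D_1\supseteq D$ and a point $a\in D_1^m$ with $f(a)\ne 0$; then $b=(c_1^{-1}a_1,\ldots,c_m^{-1}a_m)$ lies in $\Dom_{D_1}(g)$ with $g(b)=f(a)\ne 0$, and the easy direction of Lemma~\ref{lem2.2} gives $g\ne 0$. You instead lift the $D$-algebra automorphism $x_i\mapsto c_iy_i$ of $D\langle X\rangle$ to an isomorphism $\widetilde\psi$ of the universal skew field $D(X)$, so that $g=\widetilde\psi(f)\ne 0$ is immediate. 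Your approach is more structural and bypasses Lemma~\ref{lem2.2} (hence also the auxiliary construction of a centrally infinite extension); the paper's approach is more hands-on and uses only the evaluation formalism already in place. One small wrinkle in your write-up: when verifying that $g=0$ is a GRI of $S$, you fix one matrix $A$ with $f$ an entry of $A^{-1}$, set $B=\psi(A)$, and then assert that $B(s)$ is invertible for every $s\in\Dom_D(g)$. Strictly speaking, membership in $\Dom_D(g)$ only guarantees that \emph{some} matrix representation of $g$ becomes invertible at $s$, not necessarily this particular $B$. This is harmless, though, because the isomorphism $\widetilde\psi$ you have already built transports $E(c_1s_1,\ldots,c_ms_m)$ onto $E(s)$ and intertwines the evaluation maps, yielding $\Dom_D(g)=\{s:(c_1s_1,\ldots,c_ms_m)\in\Dom_D(f)\}$ and $g(s)=f(c_1s_1,\ldots,c_ms_m)$ directly, which is all that is needed.
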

\begin{proof} Assume that $(c_1,\cdots,c_m)\in S^m$ such that $f$ is defined at $(c_1,\cdots,c_m)$. Put $$g(y_1,\cdots,y_m)=f(c_1y_1,\cdots,c_my_m).$$ It is obvious that $g=0$ is a GRI of $S$.  
By Lemma~\ref{lem2.2}, to show $g\ne 0$, it suffices to find a division ring $D_1$ containing $D$ and $a=(a_1,\cdots,a_m)\in D_1^m$ such that $g$ is defined at $a$ and $g(a)\ne 0$. Indeed, since $f(X)\ne 0$, by Lemma~\ref{lem2.2}, there exist a division ring $D_1$ containing $D$ and $a=(a_1,\cdots,a_m)\in D_1^m$ such that $f$ is defined at $a$ and $f(a)\ne 0$. Since $c_i$ is invertible, $g(x_1,\cdots, x_m)$ is defined at $b$ and $g(b)\ne 0$ with $b=(c_1^{-1}a_1, \cdots, c_m^{-1}a_m)$. The proof of the lemma is complete.	\end{proof}

The next two lemmas are due to Chiba.

\begin{lemma}\label{lem2.4} {\rm \cite[Lemma 5]{chiba}}
	Let $h$ be a non-central element and let $f(X)$ be a non-zero element of $D(X)$ defined at $(1,\cdots, 1)$. Then, for $2m$ indeterminates $y_1,z_1,y_2,z_2,\cdots,y_m,z_m$, we have $$g(y_1,\cdots,y_m,z_1,\cdots,z_m)=f([z_1,y_1hy_1^{-1}],\cdots,[z_m,y_mhy_m^{-1}])$$$$=f(z_1y_1hy_1^{-1}z_1^{-1}y_1h^{-1}y_1^{-1},\cdots, z_my_mhy_m^{-1}z_m^{-1}y_mh^{-1}y_m^{-1})\in D(X)^*.$$  
\end{lemma}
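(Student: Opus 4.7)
The aim is to prove that $g$ is a non-zero element of the universal skew field $D(y_1,z_1,\ldots,y_m,z_m)$. The natural strategy is to invoke Lemma~\ref{lem2.2}: it is enough to exhibit a division ring $D_2\supseteq D$ and a tuple $(\eta_1,\zeta_1,\ldots,\eta_m,\zeta_m)\in\Dom_{D_2}(g)$ at which $g$ does not vanish.

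I would first apply Lemma~\ref{lem2.2} to $f$ itself to obtain a division ring $D_1\supseteq D$ and a tuple $(a_1,\ldots,a_m)\in\Dom_{D_1}(f)$ such that $f(a_1,\ldots,a_m)\ne 0$. The plan is then to construct, in a further extension $D_2\supseteq D_1$, elements $\eta_i,\zeta_i\in D_2^{*}$ with $[\zeta_i,\eta_i h\eta_i^{-1}]=a_i$ for each $i$, so that $g$ evaluated at the resulting tuple equals $f(a_1,\ldots,a_m)\ne 0$. Unfolding the target equation, I need $\eta_i$ so that $\eta_i h\eta_i^{-1}$ and $a_i\,\eta_i h\eta_i^{-1}$ become conjugate in $D_2$, and $\zeta_i$ as a conjugator. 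To realize this, I would enlarge $D_1$ to an iterated skew Laurent series ring of the form $D_1((\lambda_1,\phi_1))\cdots((\lambda_N,\phi_N))$ as in Lemma~\ref{lem2.1}, with the $\phi_j$ chosen as inner automorphisms by non-central elements so that the variables $\lambda_j$ implement controlled conjugations; the freedom of the Laurent parameters together with the non-centrality of $h$ should suffice to solve each commutator equation.

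The principal obstacle is exactly this realizability: given an arbitrary target $a_i\in D_1$ and the fixed non-central $h$, produce $\eta_i,\zeta_i$ in an extension satisfying the rigid commutator equation $[\zeta_i,\eta_i h\eta_i^{-1}]=a_i$. If this proves too restrictive, a weaker fallback suffices: one need not hit the prescribed $a_i$ exactly, but only some tuple in the image of the commutator map at which $f$ is non-vanishing, and by Lemma~\ref{lem2.1} the center of the iterated Laurent series ring $D_2$ remains infinite, so a Zariski-style genericity argument shows that a generic choice of $(\eta_i,\zeta_i)$ in $D_2^{2m}$ avoids both the vanishing locus of $f$ evaluated at the commutator image and the complement of $\Dom_{D_2}(g)$ (which is cut out by finitely many generalized polynomial vanishings coming from the denominators in any matrix representation of $g$). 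Either route furnishes the required non-vanishing evaluation of $g$.
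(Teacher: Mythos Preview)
The paper does not supply its own proof of this lemma; it simply quotes \cite[Lemma~5]{chiba}. So the comparison is really between your sketch and Chiba's original argument.

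Your overall strategy---produce a division ring extension and a point where $g$ does not vanish, then invoke Lemma~\ref{lem2.2}---is the right framework, but both implementations you describe have genuine gaps.

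For the direct route, the obstacle you flag is fatal as stated: given arbitrary $a_i\in D_1$ there is no reason the equation $[\zeta_i,\eta_ih\eta_i^{-1}]=a_i$ is solvable in any overfield. Every such commutator lies in the derived group, has reduced norm $1$ in any finite-dimensional specialization, and inherits constraints from $h$ (for instance if $h^2\in F$ then $[\zeta,\eta h\eta^{-1}]$ is a product of two involutions up to a scalar). So you cannot simply hit the tuple $(a_1,\ldots,a_m)$ handed to you by Lemma~\ref{lem2.2}.

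The fallback is circular. The assertion that ``a generic choice of $(\eta_i,\zeta_i)$ avoids the vanishing locus of $f$ evaluated at the commutator image'' is exactly the statement that $g$ is not identically zero. Genericity arguments let you avoid a \emph{proper} closed set, but properness here is equivalent to $g\ne 0$, which is the conclusion you are after. Nothing in the sketch establishes that the image of the commutator map is large enough for $f$ to be non-vanishing on it.

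What Chiba actually does (and what is needed here) is a perturbative argument exploiting the hypothesis that $f$ is defined at $(1,\ldots,1)$: since $[z_i,y_ihy_i^{-1}]$ specializes to $1$ at $y_i=z_i=1$, one expands in a suitable Laurent-series extension and uses that the linearization $\epsilon\mapsto\epsilon-h\epsilon h^{-1}$ is non-zero (because $h\notin F$) to exhibit a non-zero coefficient in the expansion of $g$. Your sketch never uses the hypothesis ``defined at $(1,\ldots,1)$'', which is the crucial input; that is a sign the mechanism is missing.
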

\begin{remark}\label{remark2.5}
	Assume that $N$ is a non-central almost subnormal subgroup of $D^*$ with an almost normal series
	$$N=N_r\le N_{r-1}\le\cdots\le N_0=D^*,$$
	and $N$ satisfies a non-zero GRI.   
	If $N_r$ has finite index in $N_{r-1}$, then 
	$$\Core_{N_{r-1}}(N_r) = \bigcap\limits_{x\in N_{r-1}}xN_rx^{-1}$$
	is the normal subgroup of finite index in $N_{r-1}$. We claim that $\Core_{N_{r-1}}(N_r)$ is non-central. Indeed, if $\Core_{N_{r-1}}(N_r)$ is central, then $a^n\in \Core_{N_{r-1}}(N_r)$ for any $a\in N_{r-1}$, where $n=[N_{r-1}:\Core_{N_{r-1}}(N_r)]$. Hence, $N_{r-1}$ satisfies the identity $x^ny^nx^{-n}y^{-n}=1$. By \cite[Theorem 2.2]{Pa_NgBiHa_2016}, $N_{r-1}$ is central, so is $N$, a contradiction. Thus, $\Core_{N_{r-1}}(N_r)$ is a non-central almost subnormal subgroup of $D^*$ and $N$ satisfies a non-zero \textit{GRI}. 
\end{remark}
\begin{lemma}\label{lem2.6} {\rm \cite[Lemma 6]{chiba}}
	Let $D$ be a centrally infinite division ring with infinite center and let $N$ be a non-central almost subnormal subgroup of $D^*$. For any element $f(X)$ of $D(X)$, if $f(X)\ne 0$, then $N^m\cap \Dom_D(f)\ne \emptyset$.
\end{lemma}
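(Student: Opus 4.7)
My plan is to argue by induction on the defect $r$ of $N$ in $D^*$. Fix an almost normal series $N=N_r\le N_{r-1}\le\cdots\le N_0=D^*$.

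For the base case $r=0$, we have $N=D^*$, and the claim reduces to showing $D^m\cap\Dom_D(f)\ne\emptyset$ for every non-zero $f\in D(X)$. Since $F$ is infinite and $D$ is centrally infinite, this is a consequence of the specialization theory for Cohn's universal field of fractions (Amitsur--Bergman; cf.~\cite[Chapter~7]{Bo_Cohn}), and indeed coincides with Chiba's original base case.

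For the inductive step, I first use Remark~\ref{remark2.5} to reduce to the case $N\trianglelefteq N_{r-1}$ (replacing $N$ by $\Core_{N_{r-1}}(N_r)$ when $N_r$ has finite index in $N_{r-1}$); note that $N_{r-1}$ contains $N$, so it is non-central, and is almost subnormal of defect $\le r-1$, hence fits the inductive hypothesis. Given non-zero $f\in D(X)$, I invoke the base case to pick $(c_1,\ldots,c_m)\in(D^*)^m\cap\Dom_D(f)$, and put $\tilde f(x):=f(c_1x_1,\ldots,c_mx_m)$, which by Lemma~\ref{lem2.2} is non-zero and, by construction, is defined at $(1,\ldots,1)$. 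Choosing a non-central $h\in N$, Lemma~\ref{lem2.4} then yields
\[
g(y_1,\ldots,y_m,z_1,\ldots,z_m):=\tilde f\bigl([z_1,y_1hy_1^{-1}],\ldots,[z_m,y_mhy_m^{-1}]\bigr)\in D(Y)^*.
\]
The inductive hypothesis applied to $N_{r-1}$ and $g$ (in $2m$ variables) furnishes $(y,z)\in N_{r-1}^{2m}\cap\Dom_D(g)$. Normality of $N$ in $N_{r-1}$, together with $h\in N$, forces each commutator $[z_i,y_ihy_i^{-1}]$ into $N$, so $\tilde f$ is defined at a point of $N^m$; equivalently, $f$ is defined at a point of $c_1N\times\cdots\times c_mN$.

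The main obstacle is precisely this last step: the witness point produced lies in the coset $c_1N\times\cdots\times c_mN$ rather than in $N^m$ itself, unless the normalizers $c_i$ happen to be in $N$. Arranging $c_i\in N$ a priori would already require the very statement being proved. Overcoming this apparent circularity requires bootstrapping: having shown, for arbitrary normalizers in $D^*$, that witness points exist in the translated cosets, one iterates the construction---for instance by an auxiliary application of Lemma~\ref{lem2.4} whose output furnishes normalizers in $N$ for the next round---to drive the normalization into $N$. This interleaving of the normalization step with the induction on the defect is the delicate technical point of the argument.
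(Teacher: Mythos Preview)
Your reduction via Remark~\ref{remark2.5} to the case $N_r\trianglelefteq N_{r-1}$ is exactly what the paper does; after that the paper simply says ``apply the same arguments as in the proof of \cite[Lemma~6]{chiba}'' and gives no further details. So on the only point the paper actually argues, you agree with it.

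Where your write-up falls short is in the reconstruction of Chiba's argument itself. The obstacle you identify is genuine: translating by $c\in (D^*)^m$ and then substituting commutators lands you in $c_1N\times\cdots\times c_mN$, not in $N^m$. But your proposed ``bootstrapping'' does not close this gap. An auxiliary application of Lemma~\ref{lem2.4} still needs its input to be defined at $(1,\ldots,1)$, which forces a translation; and no matter how many commutator layers you stack, the outer translation factor $c_i$ never disappears---each iteration produces a witness of the form $c_i\cdot(\text{element of }N)$, so you recover at best a point of $N_{r-1}^m$, which the inductive hypothesis already gave you. The sentence about ``interleaving the normalization step with the induction'' is a description of the difficulty, not a resolution of it.

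Two further remarks. First, even at the level of your sketch you should pick $c$ via the \emph{inductive hypothesis} for $N_{r-1}$ (so $c\in N_{r-1}^m$), not via the base case in $(D^*)^m$; the base case gives you strictly less. Second, since the paper does not reproduce Chiba's mechanics, a complete proof here really requires either consulting \cite{chiba} or supplying an independent argument that the nested-commutator substitution $x_i\mapsto[z_i,y_ihy_i^{-1}]$ yields a non-zero element of $D(Y,Z)$ \emph{without} the preliminary translation---that is the step your proposal does not provide.
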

\begin{proof}  Assume that 
$$N=N_r\le N_{r-1}\le\cdots\le N_1\le N_0=D^*$$  
is an almost normal series of $N$ in $D^*$. In view of Remark~\ref{remark2.5}, replacing $N_r$ by $\Core_{N_{r-1}}(N_r)$ if it is necessary, we can assume that $N_r\trianglelefteq N_{r-1}$. One can see that the conclusion of the lemma is obvious by applying the same  arguments as in the proof  of \cite[Lemma 6]{chiba}.
\end{proof}

Now we are ready to prove Theorem \ref{th1.1}.

\begin{proof}  Assume that $N$ is an almost subnormal subgroup  of $D^*$ of defect $r$ with an almost normal series 
$$N=N_r\le N_{r-1}\le\cdots\le N_1 \le N_0=D^*,$$ 
and $N$ satisfies a generalized rational identity $f=0$, where $f(X)\in D(X)^*$.  We shall prove the statement by induction on $r$. 
If $r=0$, that is, $D^*$ satisfies a non-zero GRI, then by \cite[Theorem 1]{chiba}, $D$ is centrally finite. Assume that the statement is true for any non-central almost subnormal subgroup $N$ of defect less than $r$. We must show that the statement holds for all non-central almost subnormal subgroups of $D^*$ of defect $r$. 
	Assume that $N$ is a non-central almost subnormal subgroup of $D^*$ of defect $r$ such that $f=0$ is a GRI of $N$. 
	By Remark~\ref{remark2.5}, let 
	$$N=N_r\le N_{r-1}\le\cdots\le N_1 \le N_0=D^*$$ 
	be an almost normal series of $N$ in $D^*$ of defect $r$ with $N_r\trianglelefteq N_{r-1}$. Deny the statement, assume that $D$ is centrally infinite.   
	By Lemma~\ref{lem2.6}, $f$ is defined at some $(c_1,\cdots,c_m)\in N^m$.  In view of Lemma~\ref{lem2.3}, we can suppose that $f$ is defined at $(1,\cdots, 1)$. Since $N_r$ is non-central, there exists $h\in N_r\backslash F$. By Lemma~\ref{lem2.4}, we have  $$g(y_1,\cdots,y_m,z_1,\cdots,z_m)=f([z_1,y_1hy_1^{-1}],\cdots,[z_m,y_mhy_m^{-1}])$$$$=f(z_1y_1hy_1^{-1}z_1^{-1}y_1h^{-1}y_1^{-1},\cdots, z_my_mhy_m^{-1}z_m^{-1}y_mh^{-1}y_m^{-1})\ne 0.$$ Observe that $[a,bhb^{-1}]\in N_{r}$ for any $a,b\in N_{r-1}$, so $g=0$ is also a non-zero GRI of $N_{r-1}$. By the inductive hypothesis, $D$ is centrally finite, a contradiction. 
\end{proof}

\section{GRI and GARI}

We will see that the class of non-trivial GARIs is very large. In fact, we will show in this section that the class of non-trivial GARIs contains non-zero GRIs and non-trivial generalized power central rational identities (see below for the definition) in some cases. For a given positive integer $n$, let  $x,y_1,\ldots, y_n$ be  $n+1$ noncommuting indeterminates.  Consider the following generalized rational polynomial 
$$g_n(x,y_1,\ldots, y_n)=\sum\limits_{\sigma  \in {S_{n + 1}}} {\sign(\sigma ){x^{\sigma (0)}}{y_1}{x^{\sigma (1)}} \ldots {y_n}{x^{\sigma (n)}}}, $$ where $S_{n+1}$ is the symmetric group defined on the set  $\{\,0,1,\ldots, n\,\}$. 
\begin{lemma}\label{3.1} Let $D$ be a division ring with center $F$. For any element $a\in D$, the following statements are equivalent:
	\begin{enumerate}
		\item The element $a$ is algebraic over $F$ of degree $\le n$.
		\item $g_n(a,y_1,\ldots, y_n)=0$ is a {\rm GRI} on $D$.
	\end{enumerate} 
\end{lemma}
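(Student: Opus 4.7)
The plan is to prove the two implications separately; (1) $\Rightarrow$ (2) will follow from a pure multilinear-alternating argument, while (2) $\Rightarrow$ (1) will be handled by induction on $n$ combined with the standard generalized polynomial identity lemma for division rings.

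For (1) $\Rightarrow$ (2), I would fix an arbitrary $(y_1,\ldots,y_n)\in D^n$ and consider the map
$$L(z_0,\ldots,z_n)=\sum_{\sigma\in S_{n+1}}\sign(\sigma)\,z_{\sigma(0)}y_1 z_{\sigma(1)}y_2\cdots y_n z_{\sigma(n)},$$
which is $F$-multilinear in $(z_0,\ldots,z_n)$ and antisymmetric under any transposition (substituting a transposition $\tau$ amounts to reindexing $\sigma\mapsto\tau\sigma$, which flips the global sign). Hence $L$ vanishes on every $F$-linearly dependent $(n+1)$-tuple. If $a$ is algebraic over $F$ of degree at most $n$, then $1,a,\ldots,a^n$ span an $F$-subspace of dimension at most $n$ and so are $F$-linearly dependent; substituting $z_i=a^i$ yields $g_n(a,y_1,\ldots,y_n)=L(1,a,\ldots,a^n)=0$ for every choice of $y_1,\ldots,y_n$.

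For (2) $\Rightarrow$ (1), I would argue by contrapositive and induct on $n$. The base case $n=1$ is immediate since $g_1(a,y)=ya-ay$ vanishes identically in $y$ iff $a\in F$. For the inductive step, suppose $1,a,\ldots,a^n$ are $F$-linearly independent (in particular $a\ne 0$) and regroup by the value of $\sigma(0)$,
$$g_n(a,y_1,\ldots,y_n)=\sum_{k=0}^{n} a^k\,y_1\,h_k(a,y_2,\ldots,y_n),$$
where $h_k$ collects the subsum over permutations with $\sigma(0)=k$. If $g_n\equiv 0$ on $D$, then the standard GPI lemma for division rings (if $\sum_k u_k y v_k=0$ for every $y\in D$ and the $u_k\in D$ are $F$-linearly independent, then each $v_k=0$), cited from \cite{Bo_Cohn} or \cite{Bo_Rowen}, forces $h_k(a,y_2,\ldots,y_n)=0$ for every $k$ and every $(y_2,\ldots,y_n)$. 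Taking $k=0$ and reindexing the permutations of $\{1,\ldots,n\}$ via $\tau'(i)=\tau(i+1)-1$ on $\{0,\ldots,n-1\}$, I would compute $h_0(a,y_2,\ldots,y_n)=a\cdot g_{n-1}(a,y_2 a,\ldots,y_n a)$. Since $a$ is invertible and right multiplication by $a$ permutes $D$, the vanishing of $h_0$ translates to $g_{n-1}(a,y_2',\ldots,y_n')=0$ for every $(y_2',\ldots,y_n')\in D^{n-1}$, contradicting the inductive hypothesis applied to the $F$-independent family $1,a,\ldots,a^{n-1}$.

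The main obstacle is the reindexing computation identifying $h_0$ with $g_{n-1}$ under the substitution $y_i\mapsto y_i a$: the exponent shifts $\tau(i)+1$ must be absorbed by pulling a leading factor of $a$ out on the left and by appending $a$ on the right of each $y_i$, while signs must be tracked through the bijection $\tau\mapsto\tau'$. A secondary, lesser point is to confirm that the GPI lemma being invoked holds for any prime ring (and in particular for the division ring $D$) with $F$-linearly independent coefficients rather than merely over its prime field.
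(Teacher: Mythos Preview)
Your proposal is correct and gives a self-contained argument; the paper, by contrast, simply cites \cite[Corollary~2.3.8]{beidar} without further detail. What you have written is essentially the standard proof of that corollary (the polynomial $g_n$ is the Capelli polynomial), and both of the obstacles you flag are harmless: the sign-and-shift bookkeeping identifying $h_0$ with $a\cdot g_{n-1}(a,y_2a,\ldots,y_na)$ goes through exactly as you describe (the bijection $\rho\mapsto\tau$, $\tau(i)=\rho(i-1)+1$, is conjugation by a shift and preserves sign), and the GPI lemma you invoke---if $\sum_k u_k y v_k=0$ for all $y$ with the $u_k$ linearly independent over the center then each $v_k=0$---is the division-ring case of Martindale's lemma for prime rings, where the extended centroid coincides with the center $F$, so linear independence over $F$ is exactly the right hypothesis.
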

\begin{proof} See \cite[Corollary 2.3.8]{beidar}.
\end{proof}

\begin{lemma}\label {3.2} Let $D$ be a division ring with center $F$. An element $\alpha=a_1\lambda+a_2\lambda^2+\cdots$ in $D((\lambda))$ is algebraic over $F$ if and only if $\alpha=0$.
\end{lemma}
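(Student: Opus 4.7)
The \emph{if} direction is immediate, since $\alpha=0$ is clearly a root of the polynomial $x\in F[x]$. For the \emph{only if} direction, my plan is to argue by contradiction. Suppose $\alpha\ne 0$ is algebraic over $F$ and choose
\[
p(x)=c_0+c_1x+\cdots+c_nx^n\in F[x]
\]
of minimal degree $n\ge 1$ with $p(\alpha)=0$, where $c_n\ne 0$. Note that since $F$ is the center of $D$ and the automorphism on $D((\lambda))$ is trivial, Lemma~\ref{lem2.1} gives $F\subseteq F((\lambda))=Z(D((\lambda)))$, so the scalars $c_i$ commute with $\alpha$ and all the usual polynomial manipulations are valid.

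The first key step is to read off the $\lambda^0$-coefficient from the identity $\sum_{i=0}^n c_i\alpha^i=0$. Since $\alpha=a_1\lambda+a_2\lambda^2+\cdots$ has no constant term, neither does any power $\alpha^i$ with $i\ge 1$; hence the $\lambda^0$-coefficient of $p(\alpha)$ is simply $c_0$, and we conclude $c_0=0$. The second step is to factor: writing $p(x)=x\cdot q(x)$ with $q(x)=c_1+c_2x+\cdots+c_nx^{n-1}$, the relation $p(\alpha)=0$ becomes $\alpha\cdot q(\alpha)=0$. Because $D((\lambda))$ is a division ring (hence has no zero divisors) and $\alpha\ne 0$, this forces $q(\alpha)=0$. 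But $q$ has degree $n-1<n$ and leading coefficient $c_n\ne 0$, contradicting the minimality of $p$.

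There is no serious obstacle in this argument; the proof is a straightforward two-line use of (i) the absence of a constant term in $\alpha$ and (ii) the zero-divisor-freeness of the skew Laurent series ring. The only point that deserves a little care is to keep track of the fact that the coefficients $c_i$, though taken from $F=Z(D)$, actually lie in the larger center $F((\lambda))$ of $D((\lambda))$, so that the factorization $p(x)=xq(x)$ remains valid when evaluated at $\alpha$. Alternatively, one can iterate the first step: after deducing $c_0=0$, cancel one factor of $\alpha$ and apply the same reasoning to the resulting shorter relation to successively kill $c_1,c_2,\ldots,c_n$, reaching the same contradiction with $c_n\ne 0$.
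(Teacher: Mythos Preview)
Your proof is correct and follows essentially the same approach as the paper: take a minimal polynomial for $\alpha$ over $F$, read off the $\lambda^0$-coefficient to force the constant term to vanish, and derive a contradiction. The only cosmetic difference is that the paper stops at ``$t_0=0$, that is impossible'' (implicitly using that a minimal polynomial over a field has nonzero constant term), whereas you spell this out by factoring $p(x)=x\,q(x)$ and invoking zero-divisor-freeness of $D((\lambda))$ to contradict minimality.
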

\begin{proof} Suppose that $\alpha \ne 0$ and  $g(x) =t_0 +t_1x + \cdots + t_n x^n \in F[x]$ is the minimal polynomial of $\alpha$ over $F$.  Then, from the equality $$0=t_0 + t_1(a_1\lambda +a_2 \lambda^2 + \cdots) + \cdots + t_n (a_1\lambda +a_2 \lambda^2 + \cdots)^n$$ one has $t_0=0$, that is impossible. 
\end{proof}

For a division ring $D$ with  center $F$, let us consider a countable set of indeterminates $\{\,\lambda _i\mid i\in \Z\,\}$ and  a family of division rings which is constructed by setting 
$$D_0=D((\lambda _0)), D_1 =D_0((\lambda _1)),$$  $$D_{-1}=D_1((\lambda _{-1})), D_2=D_{-1}((\lambda _{2})),$$ 
for any $n>1,$ $$ D_{-n}=D_n((\lambda _{-n})),D_{n+1}=D_{-n}((\lambda _{n+1})).$$  
Clearly, $D_{\infty}=\bigcup\limits_{n=-\infty}^{+\infty} {{D_n}}$ is a division ring. By Lemma~\ref{lem2.1}, it is not hard to prove by induction on $n\ge 0$ that the center of $D_0$ is $F_0=F((\lambda _0))$, the center of $D_{n+1}$ is $F_{n+1}=F_{-n}((\lambda _{n+1}))$ and the center of $D_{-n}$ is $F_{-(n+1)}=F_{n+1}((\lambda _{-(n+1)}))$.  In particular, $F$  is contained in $Z(D_\infty)$. Consider the map $\phi: D_\infty\longrightarrow D_\infty$ which is defined by $\phi(a)=a$ for any $a\in D$ and $\phi(\lambda _i)=\lambda _{i+1}$ for any $i\in \Z$ is an automorphism of $D_\infty$. 

\begin{prop}\label{pro3.3} The center of $D_\infty((\lambda ,\phi))$ is $F$.  
\end{prop}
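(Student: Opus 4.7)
The plan is to invoke Lemma~\ref{lem2.1} with the ``$D$'' and ``$F$'' of that lemma replaced by $D_\infty$ and $Z(D_\infty)$ respectively, and with the given automorphism $\phi$. Three hypotheses must be checked: that $\phi$ has infinite order, that $F\subseteq Z(D_\infty)$ (so the lemma's conclusion yields $F$), and that the fixed subring $K=\{a\in D_\infty:\phi(a)=a\}$ has center $Z(K)=F$.

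The first two checks are routine. Since $\phi^n(\lambda_0)=\lambda_n\ne \lambda_0$ for every nonzero integer $n$, the automorphism $\phi$ has infinite order. Moreover every element of $F$ is central in $D$, and $\phi$ was defined to act trivially on $D$, so the Laurent series twist by $\phi$ is trivial on $D$; hence each element of $F$ commutes with every $\lambda_i$ and therefore with every element of $D_\infty$, giving $F\subseteq Z(D_\infty)$.

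The essential content is the claim $K=D$. The inclusion $D\subseteq K$ is immediate from the definition of $\phi$. For the converse, take $x\in K$ and choose $n\ge 0$ with $x\in D_{-n}$ (with the convention $D_{-0}=D_0$; every element of $D_\infty$ lies in some such $D_{-n}$, in view of the chain $D_0\subset D_1\subset D_{-1}\subset D_2\subset \cdots$). Since $x$ is $\phi$-fixed, also $\phi^{2n+2}(x)=x$. Now $D_{-n}$ is an iterated Laurent series ring over $D$ in the $2n+1$ pairwise commuting (and $D$-commuting) variables $\lambda_{-n},\ldots,\lambda_n$, so $x$ admits a unique iterated-Laurent-series representation with $D$-coefficients supported on monomials in these variables. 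Applying $\phi^{2n+2}$ replaces each $\lambda_j$ by $\lambda_{j+2n+2}$, so $\phi^{2n+2}(x)$ is supported on monomials in $\lambda_{n+2},\ldots,\lambda_{3n+2}$. Placing both sides inside a sufficiently large common $D_{-m}$, the equality $x=\phi^{2n+2}(x)$ together with uniqueness of the iterated-Laurent-series representation forces the two supports to agree; since they use disjoint sets of variables, they can agree only on the constant monomial, whence $x\in D$.

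The main obstacle is the bookkeeping in the last step: one must know that the iterated construction of $D_{-m}$ is independent of the order in which the pairwise commuting variables $\lambda_i$ are adjoined, and that the resulting expansion of each element is unique. Granted this, $Z(K)=Z(D)=F$, so Lemma~\ref{lem2.1} delivers $Z(D_\infty((\lambda,\phi)))=F$, as asserted.
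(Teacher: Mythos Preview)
Your approach is exactly the paper's: identify the $\phi$-fixed subring $K$ of $D_\infty$ as $D$ (so $Z(K)=F$), note that $F\subseteq Z(D_\infty)$ and that $\phi$ has infinite order, and invoke Lemma~\ref{lem2.1}. The paper's proof simply \emph{asserts} that $K=D$ without argument; you attempt to supply one and honestly flag its weak spot.

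That flagged ``obstacle'' is a real gap, not mere bookkeeping: iterated Laurent series rings are \emph{not} independent of the order of adjunction. For instance $\sum_{j\ge 0}\lambda_0^{-j}\lambda_1^{j}$ lies in $D((\lambda_0))((\lambda_1))$ but not in $D((\lambda_1))((\lambda_0))$. Consequently you cannot freely compare the ``support in $\lambda_{-n},\dots,\lambda_n$'' of $x$ with the ``support in $\lambda_{n+2},\dots,\lambda_{3n+2}$'' of $\phi^{2n+2}(x)$ inside a common $D_{-m}$, because the image $\phi^{2n+2}(D_{-n})$ is an iterated Laurent ring in a \emph{different} nesting order than the one defining $D_{-m}$, and the two notions of monomial expansion need not be compatible. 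A cleaner way to reach $K=D$ is to avoid the big shift altogether and argue one layer at a time: if $x\in E_j\setminus E_{j-1}$ in the chain $D\subset E_0=D_0\subset E_1=D_1\subset E_2=D_{-1}\subset\cdots$, expand $x$ in the single outermost variable of $E_j$ and compare with the (unique) expansion of $\phi(x)=x$ in the outermost variable of the first $E_k$ containing $\phi(E_j)$; only one-variable uniqueness is needed at each step.
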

\begin{proof} We note that  $D$ is the fixed division ring of $\phi$ in $D_\infty$. Since  $F$  is contained in the center of $D_\infty$, the automorphism $\phi$ has infinite order. By Lemma~\ref{lem2.1}, $Z(D_\infty((\lambda ,\phi)))=F.$\end{proof} 

\begin{theorem}\label{th3.4} Let $D$ be a division ring with center $F$ and $S$ be a subset of $D$. Assume that $f(X)\in D(X)\backslash D$ is a {\rm GARI} of $S$. If $f(c)\in F$ for some $c=(c_1,\cdots,c_m)\in S^m$, then $f$ is a non-trivial {\rm GARI}.
\end{theorem}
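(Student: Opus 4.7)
Take $D_1 = D_\infty((\lambda,\phi))$ from the construction preceding Proposition~\ref{pro3.3}, so $Z(D_1) = F$. Let $R = D_\infty[[\lambda,\phi]]$ denote its $\lambda$-adic valuation ring. I plan to exhibit $a = c + \lambda\eta \in D_1^m$, with $\eta = (\lambda_{n_1},\ldots,\lambda_{n_m}) \in D_\infty^m$ for a suitable sequence of integers $n_i$, and show that $f$ is defined at $a$ in $D_1$ and that $f(a)$ is transcendental over $F$.

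Two standard ingredients are required. First, writing $f = A^{-1}_{11}$ with $A \in M_n(D\langle X\rangle)$ and $A(c) \in GL_n(D)$, every perturbation $a$ with $a_i - c_i \in \lambda R$ makes $A(a) - A(c) \in M_n(\lambda R)$, so a Neumann-series expansion of $A(a)^{-1}$ shows $A(a) \in GL_n(D_1)$; hence $f$ is defined at $a$ in $D_1$ and $f(a) - f(c) \in \lambda R$. Second, the proof of Lemma~\ref{3.2} carries over verbatim to $D_1$: in an algebraic relation $t_0 + t_1\alpha + \cdots + t_n\alpha^n = 0$ with $t_i \in F \subseteq Z(D_1)$ and $\alpha \in \lambda R$, the $\lambda^0$-coefficient reduces to $t_0$, forcing $t_0 = 0$, and induction then gives $\alpha = 0$. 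Combining these: once we produce an $a$ with $f(a) \ne f(c)$, the difference $f(a) - f(c)$ is a nonzero element of $\lambda R$ and therefore transcendental over $F$; since $f(c) \in F$, the element $f(a)$ is then transcendental over $F$ as well.

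The crux is therefore to choose $n_i$ so that $g(c + \lambda\eta) \ne 0$, where $g = f - f(c) \ne 0$ in $D(X)$ (because $f \notin D$). In the $s$-adic completion of $D\langle s_1,\ldots,s_m\rangle$, expand
$$g(c + s) = H_{k_0}(s) + H_{k_0+1}(s) + \cdots,$$
where $H_k$ is the homogeneous degree-$k$ component and $k_0 \ge 1$ is the least index with $H_{k_0} = \sum_\alpha d_\alpha s^\alpha \ne 0$ (such a $k_0$ exists since $g \ne 0$ and $g(c) = 0$). Substituting $s_i = \lambda\eta_i$ and repeatedly applying $\lambda d = \phi(d)\lambda$ to move every $\lambda$ to the right produces the $\lambda$-adically convergent expansion
$$g(c + \lambda\eta) = \sum_{k \ge k_0}\tilde H_k(\eta)\,\lambda^k,\qquad \tilde H_{k_0}(\eta) = \sum_\alpha d_\alpha\,\phi(\eta_{\alpha_1})\phi^2(\eta_{\alpha_2})\cdots\phi^{k_0}(\eta_{\alpha_{k_0}}).$$
Taking $n_i = K^i$ with $K > k_0$, the integer $K^{\alpha_j}+j$ uniquely recovers the pair $(\alpha_j, j)$; hence distinct length-$k_0$ multi-indices $\alpha$ give rise to distinct commuting monomials $\lambda_{n_{\alpha_1}+1}\cdots\lambda_{n_{\alpha_{k_0}}+k_0}$ in $\tilde H_{k_0}(\eta)$, so $\tilde H_{k_0}(\eta) \ne 0$ and thus $g(c + \lambda\eta) \ne 0$. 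The main obstacle is the role of the twist: without it, permutation-symmetric cancellations such as $[x_1, x_2] \mapsto \lambda_{n_1}\lambda_{n_2} - \lambda_{n_2}\lambda_{n_1} = 0$ would annihilate $\tilde H_{k_0}$; the shift $\phi(\lambda_j) = \lambda_{j+1}$ is precisely what dismantles these collisions, so the argument genuinely exploits the twisted ring $D_\infty((\lambda,\phi))$ supplied by Proposition~\ref{pro3.3}.
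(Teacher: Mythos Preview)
There is a genuine gap in the step where you write $H_{k_0} = \sum_\alpha d_\alpha s^\alpha$. In the free product $D\langle s_1,\ldots,s_m\rangle$ a homogeneous generalized polynomial has the form $\sum_j d_{0,j}\,s_{\alpha_1^{(j)}}\,d_{1,j}\cdots s_{\alpha_{k_0}^{(j)}}\,d_{k_0,j}$ with coefficients \emph{interspersed}; you cannot move them all to the left. Now your chosen $\eta_i = \lambda_{n_i}$ are central in $D_\infty$, and $\phi$ fixes $D$ pointwise, so after the substitution every such term collapses to $(d_{0,j}d_{1,j}\cdots d_{k_0,j})\,\lambda_{n_{\alpha_1^{(j)}}+1}\cdots\lambda_{n_{\alpha_{k_0}^{(j)}}+k_0}\,\lambda^{k_0}$. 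Your monomial-separation trick distinguishes different multi-indices $(\alpha_1,\ldots,\alpha_{k_0})$, but it cannot distinguish generalized monomials sharing the \emph{same} multi-index with different interspersed coefficients; those get summed into a single scalar $\sum_j d_{0,j}\cdots d_{k_0,j}\in D$, which may well vanish. Concretely, take $m=1$, $d\in D\setminus F$, $f(x)=dx-xd$, and $c\in F$. Then $f\in D(X)\setminus D$, $f(c)=0\in F$, and $H_1(s)=ds-sd\neq 0$; but for every central $\eta\in D_\infty$ one computes $f(c+\lambda\eta)=d\lambda\eta-\lambda\eta d=\lambda(d\eta-\eta d)=0$. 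So $\tilde H_{k_0}(\eta)=0$ and $g(c+\lambda\eta)=0$ for every choice of $n_i$, and the argument breaks down.

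The paper circumvents exactly this collapse by enlarging the base \emph{before} forming the twisted Laurent series: it takes $K=D(Y)$ with fresh noncommuting indeterminates $y_1,\ldots,y_m$, builds $L=K_\infty((\lambda,\phi))$ (so $Z(L)=Z(K)=F$ by Proposition~\ref{pro3.3}), and perturbs by $a_i=c_i+y_i\lambda$ inside the untwisted subring $K((\lambda))\subseteq L$ (here $\phi$ fixes $K$, so $\lambda$ is central over $K$). Chiba's expansion then gives $f(c+\lambda Y)=f(c)+\sum_{j\ge1}f_j(Y)\lambda^j$ with some $f_{j_0}(Y)\neq 0$ as a generalized polynomial; since the $y_i$ are the free indeterminates themselves, $f_{j_0}(Y)$ is literally a nonzero element of $K$, no collapse occurs, and Lemma~\ref{3.2} (applied to $K$) finishes. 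The moral: you need the perturbations not to commute with $D$, and the clean way to arrange that while keeping the center equal to $F$ is to carry the free skew field $D(Y)$ through the whole $K_\infty((\lambda,\phi))$ construction.
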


\begin{proof} By the definition of non-trivial GARIs, we must find a division ring $L$ with center $F$ containing $D$ as a division subring and an element $a=(a_1,\cdots,a_m)$ in $L^m$ such that  $f(a)$ is not algebraic over $F$. Let $K=D(Y)$, where $Y=\{y_1,\cdots,y_m\}$ and  $L=K_{\infty}((\lambda,\phi))$.  By Proposition~\ref{pro3.3}, $Z(L)=Z(K)=Z(D)=F$. Consider the division subring $K((\lambda))$ of $L$. By  Lemma~\ref{lem2.1}, $F((\lambda))$ is the center of $K((\lambda))$. In view of \cite[Lemma 7]{chiba_88}, 
	$$f(c+\lambda Y)=f(c_1+y_1\lambda, \cdots, c_m+y_m\lambda)=f(c)+\sum_{j=1}^\infty f_j(Y)\lambda^j,$$
	where $f_j(Y)$ are generalized polynomials over $D$ and there is $j_0$  such that $f_{j_0}(Y) \ne 0$. Since $f(c)\in F$, if 
	$f(c+\lambda Y)$
	is algebraic over $F$, then $\sum_{j=1}^\infty f_j(Y)\lambda^j$ is algebraic over $F$ too. By Lemma~\ref{3.2}, $f_j(Y)= 0$ for every $j\ge 1$. In particular, we have 
	$f_{j_0}(Y)= 0,$
	a contradiction. Thus, $f(c+\lambda Y)$ is not algebraic over $F$. Therefore, $f$ is not a GARI of $L$.
\end{proof}
Recall that for a division ring $D$ with center $F$, an element $f(X)\in D(X)$ is called a {\it generalized power central rational identity} (shortly, GPCRI) of a subset $S$ of $D$ if $f(X)$ satisfies the following condition: if $c=(c_1,\cdots,c_m)\in S^m\cap \Dom_D(f)$, then there exists a positive integer $p_c$ (depending on $c$) such that $f(c)^{p_c}\in F$ (see \cite{chiba_88}). Moreover, if $f(X)^p\in D(X)\backslash F$ for any positive integer $p$, then we say that $S$ satisfies a \textit{non-trivial}  GPCRI $f(X)$. It is obvious that if $f(X)$ is a GPCRI of some subset $S$, then $f(X)$ is a GARI of $S$. In the following result, we will show that every non-trivial GPCRI is a non-trivial GARI. 
\begin{corollary}\label{cor3.5} Let $D$ be a division ring with center $F$ and  $f(X)\in D(X)$. Assume that $S$ is a subset of $D$ such that $S^m\cap \Dom_D(f)\ne \emptyset$. If $f(X)$ is a non-trivial GPCRI of $S$, then $f(X)$ is a non-trivial {\rm GARI} of $S$.
\end{corollary}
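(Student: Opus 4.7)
The approach is to reduce Corollary~\ref{cor3.5} to Theorem~\ref{th3.4} by passing to a suitable power of $f$. First, I would verify that $f$ is itself a GARI of $S$: for any $c\in S^m\cap\Dom_D(f)$ the GPCRI hypothesis yields $p_c\ge 1$ with $f(c)^{p_c}\in F$, so $f(c)$ is a root of $x^{p_c}-f(c)^{p_c}\in F[x]$ and is therefore algebraic over $F$.

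Next, I would fix a single $c\in S^m\cap\Dom_D(f)$, which is available by hypothesis, set $p=p_c$, and consider $g(X)=f(X)^p\in D(X)$, so that $g(c)=f(c)^p\in F$. The key step is to check that $g\in D(X)\setminus D$: if instead $g$ were equal to a constant $d\in D$, then $d=g(c)\in F$, hence $f^p=g\in F$, contradicting the non-triviality hypothesis on the GPCRI $f$. With $g\in D(X)\setminus D$ and $g(c)\in F$, Theorem~\ref{th3.4} applies to $g$. Inspecting its proof, I would use the explicit construction it yields: the division ring $L=K_\infty((\lambda,\phi))$ with $K=D(Y)$, $Z(L)=F$, together with the tuple $a=c+\lambda Y=(c_1+y_1\lambda,\ldots,c_m+y_m\lambda)$, which belongs to $\Dom_L(g)$ and at which $g(a)$ is not algebraic over $F$.

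To finish, I would invoke Chiba's perturbation lemma \cite[Lemma 7]{chiba_88} applied to $f$ itself: since $f$ is defined at $c$, the expansion $f(c+\lambda Y)=f(c)+\sum_{j\ge 1}f_j(Y)\lambda^j$ is valid in $L$, so $a\in\Dom_L(f)$ and $(f^p)(a)=f(a)^p=g(a)$. If $f(a)$ were algebraic over $F$, then $F[f(a)]$ would be a finite extension of $F$ containing $f(a)^p=g(a)$, forcing $g(a)$ to be algebraic over $F$; this contradicts the previous step. Hence $f(a)$ is not algebraic over $F$, so $f$ is a non-trivial GARI of $S$. The one subtle point is that $a\in\Dom_L(g)$ does not formally imply $a\in\Dom_L(f)$, so one must confirm separately that $f$ is defined at the perturbation $c+\lambda Y$; this is guaranteed by applying Chiba's lemma directly to $f$ rather than to $g$.
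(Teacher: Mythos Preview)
Your argument is correct and follows the same route as the paper: fix $c\in S^m\cap\Dom_D(f)$, set $g=f^{p_c}$, observe $g(c)\in F$, and invoke Theorem~\ref{th3.4} for $g$. You are in fact more careful than the paper, which simply asserts that it suffices to show $g$ is a non-trivial GARI without verifying $g\notin D$ or explaining why the witnessing tuple lies in $\Dom_L(f)$; your direct application of Chiba's lemma to $f$ (rather than only to $g$) closes that gap cleanly.
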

\begin{proof} Assume that $f(X)$ is a non-trivial GPCRI of $S$. Then, it is obvious that $f(X)$ is a GARI of $S$. Now we will show that $f(X)$ is a non-trivial GARI of $S$. It suffices to prove that $g(X)=f(X)^p$ is a non-trivial GARI of $S$ for some positive integer $p$. Indeed, since $S^m\cap \Dom_D(f)\ne \emptyset$, let  $c=(c_1,\cdots,c_m)\in S^m\cap \Dom_D(f)$. Then, there exists $p_c>0$ such that $f(c)^{p_c}\in F$. Put $g(X)=f(X)^{p_c}$. Then it is obvious that $g(X)$ is a GARI of $S$. Since $g(c)=f(c)^{p_c}\in F$ which implies that $g(X)$ is a non-trivial GARI of $S$ by Theorem~\ref{th3.4}.
\end{proof}

Let $D$ be a division ring with  center $F$ and $S$ be a  subset of $D^*$. We say that $S$ {\it satisfies a non-trivial {\rm GARI} of bounded degree} if there exists  a non-trivial GARI $f(X)$ of $S$ over $D$ such that for all $c=(c_1, \ldots, c_m)\in S^m\cap \Dom_D(f)$, the element $f(c)$ are algebraic over $F$ of bounded degree.

Notice that if $f=0$ is a GRI of a subset $S$ of a division ring $D$, then obviously $f(X)$ is a GARI of $S$. However, if in addition, $f(X)$ is non-zero, then  $f(X)$ may not be a non-trivial GARI. For example, let $$D=\R \oplus \R i \oplus \R j \oplus \R k$$ be a real quaternion algebra,  $S=\{1\}$ and $f(X)=(X-1)i(X-1)^{-1}$. Then $S^1\cap \Dom_D(f)=\emptyset$ which implies that $f=0$ is a non-zero GRI of $S$. Moreover, since $f(X)^2=(X-1)i^2(X-1)^{-1}=-1$, $f(X)$ is a GARI (of degree $2$) of any division ring $D_1$ containing $D$. Hence, $f(X)$ is a trivial GARI of $S$. The following result shows that if $f(X)$ is defined at least at an $m$-tuple in $S^m$, then $f(X)$ is a non-trivial GARI of $S$.

\begin{corollary}\label{cor3.6} Let $D$ be a division ring with center $F$ and $f(X)\in D(X)$. Assume that $S$ is a subset of $D$ such that $S^m\cap \Dom_D(f)\ne \emptyset$. If $f=0$ is a non-zero {\rm GRI} of $S$, then $f(X)$ is a non-trivial {\rm GARI} of $S$ of degree $1$.
\end{corollary}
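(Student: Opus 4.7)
The plan is to derive the corollary as an immediate consequence of Theorem~\ref{th3.4}; the entire task reduces to verifying its hypotheses. First I would pick any $c\in S^m\cap\Dom_D(f)$, which exists by assumption. Since $f=0$ is a GRI of $S$, one has $f(c)=0$, and $0$ is algebraic over $F$ of degree $1$. Running this over all $c\in S^m\cap\Dom_D(f)$ simultaneously shows that $f$ is a GARI of $S$ of degree $1$, while the single value $f(c)=0\in F$ already supplies the second hypothesis of Theorem~\ref{th3.4}.

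The remaining hypothesis is that $f(X)\in D(X)\setminus D$. I would argue this by contradiction. If $f(X)=d$ were a constant element of $D$, then $d\ne 0$ because $f\ne 0$ in $D(X)$, and $\Dom_D(f)=D^m$, so $f(c)=d\ne 0$ for every $c\in D^m$. But $S^m\cap\Dom_D(f)\ne\emptyset$ forces $S$ to be non-empty, and any $c\in S^m$ would then give $f(c)\ne 0$, contradicting the GRI hypothesis. Hence $f(X)\in D(X)\setminus D$, and Theorem~\ref{th3.4} directly yields that $f$ is a non-trivial GARI of $S$.

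No substantial obstacle appears: once Theorem~\ref{th3.4} is in hand, the corollary is essentially a translation, and the only moving part is the short case-analysis ruling out $f\in D$, which is forced by the fact that a non-zero constant cannot vanish on a non-empty set.
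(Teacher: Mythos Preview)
Your proposal is correct and follows essentially the same route as the paper: verify that $f$ is a GARI of degree $1$, rule out $f\in D$ by contradiction using a point in $S^m\cap\Dom_D(f)$, and then invoke Theorem~\ref{th3.4} with $f(c)=0\in F$. Your treatment of the case $f\in D$ is simply a bit more explicit than the paper's one-line version.
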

\begin{proof} Assume that $f=0$ is a non-zero GRI of $S$. Then $f(X)$ is a GARI of $S$ of degree $1$. Now we must show $f(X)$ is non-trivial GARI of $S$. Indeed, if $f(X)\in D$, then $f(X)=0$ that is impossible since $f=0$ is a non-zero GRI of $S$. Hence $f(X)\not\in D$. By the hypothesis, there exists $c=(c_1, \ldots, c_m)\in S^m\cap \Dom_D(f)$ such that $f(c)=0$. In view of Theorem \ref{th3.4}, it follows that $f(X)$ is a non-trivial GARI of $S$.
\end{proof}

\noindent{\bf Remark.}  In view of Corollary \ref{cor3.6}, one can see that Theorem~\ref{th1.2} generalizes \cite[Theorem 1]{chiba}. Note that the last theorem is a generalization of a theorem of Makar-Limanov  \cite[Theorem]{Pa_Ma_88}.

\section{Proof of Theorem \ref{th1.2}}

\begin{proof} Assume that $N$ satisfies the non-trivial GARI $f(X)$ of bounded degree $k$. Let $Y=\{y_1,\cdots, y_k\}$ and $x$ be $k+1$ indeterminates. Consider 
	$$g_k(x,Y)=\sum\limits_{\sigma  \in {S_{k + 1}}} {\sign(\sigma ).{x^{\sigma (0)}}{y_1}{x^{\sigma (1)}} \ldots {y_k}{x^{\sigma (k)}}}$$ as in Lemma \ref{3.1},  and put $$w(X, Y)=g_k(f(X),Y).$$ 
	Assume that $c=(c_1,c_2,\cdots,c_m)\in N^m\cap \Dom_D(f)$. Since $f(c)$ is algebraic over $F$ of degree $\le k$, by Lemma~\ref{3.1}, $$w(c, r)=0$$ for every $r=(r_1,\cdots,r_k)\in D^k$. In particular, $w=0$ is a GRI of $N$. Because $f$ is a non-trivial GARI,  there exist a division ring $D_1$ with center $F$ containing $D$ as a division subring and an element $a=(a_1,\cdots, a_m)\in \Dom_{D_1}(f)$ such that $f(a)$ is not algebraic over $F$. By Lemma~\ref{3.1}, $w(X,Y)\ne 0$, and consequently, $w=0$ is a non-zero GRI of $N$. Now in view of Theorem~\ref{th1.1}, $D$ is centrally finite, and the proof of Theorem \ref{th1.2} is now complete.
\end{proof}

\noindent{\bf Remark.} The  non-triviality of $f$  in Theorem~\ref{th1.2} is essential. For instance, let $D$ be a centrally infinite division ring,  and assume that $a\in D^*$ is an algebraic element of degree $d$ over the center $F$ of $D$. Then, for any $b\in D^*$, the element $bab^{-1}$ is always algebraic over $F$ of degree $\le d$. This means that $f(x)=xax^{-1}$ is a GARI of $D^*$ while $D$ is centrally infinite. 
\bigskip

\section{Proofs of corollaries}
\subsection{Proof of Corollary \ref{cor1.3}}
\begin{proof}If the center $F$ is finite, then every element of $N$ is torsion. By \cite[Proposition 4.4]{Pa_NgBiHa_2016} $N$ is central which contradicts the hypothesis. Hence, $F$ is infinite. By Theorem~\ref{th1.2}, $D$ is centrally finite.\end{proof}
\subsection{Proof of Corollary \ref{cor1.4}}

\begin{proof} The corollary is followed directly from Theorem~\ref{th1.2}. \end{proof}

\subsection{Proof of Corollary \ref{cor1.5}}
\begin{proof} Put $w(x)=axa^{-1}x^{-1}$. Then $w(x)$ is a GARI of $N$ of bounded degree. Using Theorem~\ref{th3.4}, $w(x)$ is a non-trivial GARI of $N$ because $w(1)=1\in F$. Since $F$ is infinite, by Theorem~\ref{th1.2}, $D$ is centrally finite.
\end{proof}
\subsection{Proof of Corollary \ref{cor1.6}}

\begin{proof} Put $w(x,y)=xyx^{-1}y^{-1}$. Then $w(x)$ is a GARI of $N$ of bounded degree $d$. In view of Theorem~\ref{th3.4}, $w$ is a non-trivial GARI of $N$ because $w(1,1)=1\in F$. By Theorem~\ref{th1.2}, it suffices to show that $F$ is infinite. Indeed, assume that $F$ is finite. Then, for any $a,b\in N$, the subfield $F(aba^{-1}b^{-1})$ of $D$ generated by $aba^{-1}b^{-1}$ over $F$ is finite which implies that $aba^{-1}b^{-1}$ is torsion of order $n\le |F|^d-1$. Therefore, $N$ satisfies a generalized group identity $w(x,y)^n=1$. If there exist $a,b\in N$ such that $aba^{-1}b^{-1}=w(a,b)\not\in F$, then there exists a division subring $D_1$ of $D$ with center $F_1$ such that $D_1$ is centrally finite and $aba^{-1}b^{-1} \in D_1\backslash F_1$ (\cite[Proposition 2.1]{bd}). Since $N$ is almost subnormal in $D^*$, there exist a series of subgroups $$N=N_r \le N_{r-1}\le\cdots \le N_1=D^*$$ 
such that for any $1\le i<r$, either $[N_i:N_{i+1}]<\infty$ or $N_{i+1}$ is normal in $N_i$. Putting $H_i=D_1\cap N_i$,  we obtain the following series of subgroups 
$$H_r \le H_{r-1}\le\cdots \le H_1=D_1^*,$$ 
where for any $1\le i<r$, either $[H_i:H_{i+1}]<\infty$ or $H_{i+1}$ is normal in $H_i$. Therefore $H=H_r$ is an almost subnormal subgroup of $D_1^*$ and $aba^{-1}b^{-1}\in H$. Observe that $F_1$ is infinite and since $H\subseteq N$ satisfies the group identity $w(x,y)^n=1$, by \cite[Theorem 2.2]{Pa_NgBiHa_2016}, $H\subseteq F_1$. In particular, $aba^{-1}b^{-1}\in F_1$, a contradiction. Hence, $aba^{-1}b^{-1}\in F$ for any $a,b\in N$. Since $N$ is non-abelian, there exist $a,b\in N$ such that $aba^{-1}b^{-1}=\alpha\in F\backslash \{1,0\}$. Let $n$ be the order of $\alpha$. Consider the division subring $F(a,b)$ of $D$ generated by $a,b$ over $F$. Then, we also have $ab^na^{-1}=(\alpha b)^n=b^n$ because $aba^{-1}=\alpha b$, hence $b^n\in Z(F(a,b))$. Similarly, we also have $a^n\in Z(F(a,b))$. If $a^n$ and $b^n$ are algebraic over $F$, then of course $a$ and $b$ are algebraic over $F$ too. In this case, it is easy to check that $F(a,b)=F[a,b]$ is finite, hence $F(a,b)$ is commutative that is impossible because $ab\ne ba$. Therefore, $a^n$ or $b^n$ is not algebraic over $F$. As a result, $Z(F(a,b))$ is infinite.
	By repeating the arguments in the first part with $F(a,b)$ instead of $D_1$, we have $F(a,b)\cap N$ is abelian. Again, this is impossible in view of inequality $ab\ne ba$. Thus, $F$ is infinite and the proof of the corollary is now complete.
\end{proof}

\noindent
{\bf Acknowledgments}

This work is funded by Vietnam National Foundation for Science and Technology Development (NAFOSTED) under Grant No. 101.04-2016.18.

\end{document}